\newcommand{\Ad}{\mathrm{Ad}}
\newcommand{\Gl}{\mathrm{GL}}
\DeclareMathOperator{\Hom}{\mathrm{Hom}}
\newcommand{\R}{\mathbb{R}}
\newcommand{\bs}{\backslash}
\newcommand{\cg}{{}_c G}
\newcommand{\cov}{\nabla}
\newcommand{\ddt}[1]{\left.\frac{d}{dt}\right|_{#1}}
\newcommand{\ev}{\mathrm{ev}}
\newcommand{\fb}{\mathbf{F}}
\newcommand{\forus}[1]{}
\newcommand{\from}{\colon}
\newcommand{\trlift}{\tilde{\lie}}
\newcommand{\glie}{\mathfrak{g}}
\newcommand{\gnktimes}{{\times}_{G_n^k}}
\newcommand{\gtimes}{\times_G}
\newcommand{\hlie}{\mathfrak{h}}
\newcommand{\id}{\mathrm{id}}
\newcommand{\im}{\mathrm{Im}}
\newcommand{\inv}{\mathrm{inv}}
\newcommand{\lie}{\mathcal{L}}
\newcommand{\mfn}{\mathcal{M}f_n}
\newcommand{\mf}{\mathcal{M}f}
\DeclareMathOperator{\opwedge}{\wedge}
\newcommand{\pr}{\mathrm{pr}}
\newcommand{\rlie}{\mathsterling}
\newcommand{\shuffles}{\mathrm{Sh}}
\newcommand{\vb}{\mathcal{V}}
\newcommand{\vf}{\mathfrak{X}}
\newcommand{\vl}{\mathrm{vl}}
\newcommand{\vpr}{\mathrm{vpr}}
\newtheorem{theorem}{Theorem}[section]
\newtheorem{proposition}[theorem]{Proposition}
\newtheorem{proposition-definition}[theorem]{Proposition-Definition}
\newtheorem{corollary}[theorem]{Corollary}
\theoremstyle{remark}
\newtheorem{example}[theorem]{Example}
\newtheorem{remark}[theorem]{Remark}
\crefname{example}{Example}{Examples}
\theoremstyle{definition}
\newtheorem{definition}[theorem]{Definition}
\title{Darboux-Lie derivatives}
\author[A. De Nicola]{Antonio De Nicola}
 \address{Dipartimento di Matematica, Università degli Studi di Salerno, Via Giovanni Paolo II 132, 84084 Fisciano, Italy}
 \email{antondenicola@gmail.com}
\author[I. Yudin]{Ivan Yudin}
 \address{University of Coimbra, CMUC, Department of Mathematics, 3001-501 Coimbra, Portugal}
 \email{yudin@mat.uc.pt}
\subjclass[2010]{53A55, 53C10, 58A20}
\keywords{Lie derivative; Darboux derivative; covariant derivative; natural bundle; G-structures; associated bundle}
\thanks{This work was partially supported by the Centre for Mathematics of the
University of Coimbra - UIDB/00324/2020, funded by the Portuguese Government through FCT/MCTES.
ADN is a member of GNSAGA - Istituto Nazionale di Alta Matematica. We would like to thank the anonymous referee for the useful suggestions.}
\begin{document}
\begin{abstract}
We introduce the Darboux-Lie derivative along a vector field of fiber bundle maps from natural bundles to associated fiber bundles and study its properties.
\end{abstract}
\maketitle
\section{Introduction}
This article is an accompanying paper to the forthcoming series of articles by the same authors on
the theory of $G$-structures.
In the course of reformulating the basic notions pertaining to $G$-structures on manifolds
in terms  of gauge equivalence classes of soldering forms,
we stumbled upon the absence of a properly developed calculus of derivatives for
such forms and for gauge transformations.
A \emph{soldering form} $\beta$ in this context refers to an isomorphism of vector bundles
$\beta \from TM \to P\gtimes V$, where $P$ is a principal $G$-bundle and
$V$ is a $G$-module (cf. \cite{alekseevsky}) and a \emph{gauge transformation} is a
section of the bundle $P\gtimes \cg$, where $\cg$ denotes the group $G$ with the conjugation action on itself. 

In this article we introduce and study properties of  Darboux-Lie and covariant Darboux-Lie derivatives of fiber bundle morphisms from $F(M)$ to $P\gtimes N$,
where $M$ is a manifold of dimension $n$, $F$ is a natural bundle on
$n$-dimensional manifolds, $P$ is a principal $G$-bundle over $M$, and $N$ is a
manifold equipped with a smooth left $G$-action.
The case of soldering forms is covered by $F(M) = TM$ and $N=V$ and the case of  gauge
transformations corresponds to  $F(M) =M$ and $N=\cg$.
We choose the generality of an arbitrary natural bundle with a shot for future
generalizations to high-order $G$-structures.

For a fixed $G$-module $W$ and $G$-equivariant $1$-form, the Darboux-Lie
derivative $\rlie_{\widetilde{X}}^\omega \beta \from F(M) \to
P\gtimes W$ of the bundle map $\beta \from
F(M) \to P\gtimes N$ is taken along a $G$-equivariant vector field
$\widetilde{X}$ on $P$. The complete definition, given in~\Cref{sec:darboux}, is slightly too technical to be
fully stated in the introduction.

If $P$ is equipped with a $G$-principal connection,
then the horizontal lift $X^H$ of a vector field $X$ on $M$ is $G$-equivariant.
We define the covariant Darboux-Lie derivative of $\beta$ along $X$ as
$\rlie^\omega_{X^H} \beta$.

We show in the article that various classically known derivatives (covariant,
Lie, Darboux) can be considered as special cases of (covariant) Darboux-Lie
derivatives.
We also prove various properties, including a characterization of (covariant)
Darboux-Lie derivatives in terms of flows and a suitable Leibniz
rule. We conclude with the proof of the Cartan magic formula for covariant
Darboux-Lie derivatives.

The paper is organized as follows. Section~\ref{sec:prel} contains
some preliminary material. In particular, we fix the notation for principal and
associated bundles, and recall the definition and main properties of natural
bundles.

In~\Cref{sec:alphaderivatives}, we discuss a metamathematical concept of
derivative. This discussion culminates in the definition of an
$\alpha$-derivative. We show that this notion incorporates Lie derivatives
of Janyška-Kolář and of Godina-Matteucci.
The (covariant) Darboux-Lie derivatives are defined in~\Cref{sec:darboux} as a
special case of $\alpha$-derivative.

Section~\ref{sec:examples} demonstrates how classical Lie derivatives of tensor fields and standard covariant derivatives of sections of vector bundles can be seen as particular instances of the Darboux-Lie derivative.

In Section~\ref{sec:vertical}, we calculate the Darboux-Lie derivative along
vertical $G$-invariant vector fields on $P$. This permits to compare
covariant Darboux-Lie derivatives calculated with respect to two $G$-principal
connections on $P$.

\Cref{sec:leibniz}  is dedicated to establishing various Leibniz rules for
the (covariant) Darboux-Lie derivative with respect to operations like fiber products, tensor products, and compositions of fiber bundle maps.

In \Cref{sec:magic}, we  derive a Cartan-type magic formula for covariant
Darboux-Lie derivative when applied to differential forms with values in an associated bundle.

In~\Cref{sec:future}, we announce some results related to $G$-structures.

\section{Preliminaries}\label{sec:prel}
\subsection{Principal bundles}

Let $G$ be a Lie group, $M$ a smooth manifold and $P$ a
principal $G$-bundle.
Then there is a unique
smooth map
\begin{equation*}
\begin{aligned}
\backslash \from P \times_M P & \to G \\
( p, p') & \mapsto p \bs p',
\end{aligned}
\end{equation*}
such that $p(p \backslash p') = p'$ for every pair $p$, $p'$ in the same fiber
of $P$. This map will be handy to explicitly write various maps involving principal
bundles.

Clearly, $p\backslash p = e_G$.
Moreover, for all  $g \in G$ and $(p,p') \in P\times_M P$, we have
\begin{equation}
\label{divisionoperatorproperties}
\begin{aligned}
(pg)\bs p' = g^{-1} ( p\bs p'),\quad p \bs (p'g) = (p\bs p') g,\quad p'\bs p =
(p\bs p')^{-1}.
\end{aligned}
\end{equation}

\subsection{Associated bundles}
Let $N$ be a manifold equipped with a smooth left $G$-action.
The \emph{associated fiber
bundle} $P \gtimes N$ over $M$  is defined as the quotient $(P\times N)/G$, where
the group
$G$ acts on $P \times N$ by $(p,y) g= (pg, g^{-1}  y)$ for $(p,y) \in
P\times N$ and $g\in G$.
We write $[p,y]$ for the $G$-orbit of $(p,y)$ in $P\times N$.
The projection from $P\gtimes N$ to $M$ is defined by sending the orbit $[p,y] =
\left\{\, (pg,g^{-1} y)  \,\middle|\,  g \in G \right\}$ to the common base
point of the elements $pg$, $g\in G$.

One can verify that the following map
is both well-defined and smooth:
\begin{equation*}
\begin{aligned}
\bs \from P\times_M (P\gtimes N) & \to N \\
(p, [p',y]) & \mapsto p\bs [p',y] :=  (p\backslash p')y
\end{aligned}
\end{equation*}
Notice that
$p\bs [p,y] = y$.
This map is particularly useful for explicit construction of maps between
associated bundles.
\subsection{Vector fields and their flows}
Given a vector field $X$ on $M$, we write $\gamma^X_x$ for the maximal integral
curve of $X$ that passes through $x \in M$ at time $0$. We denote its interval
of definition by $J^X_x \subset \mathbb{R}$. The union
\begin{equation*}
\begin{aligned}
D(X) := \bigcup_{x \in M} J^X_x \times \{x\} \subset \mathbb{R} \times M
\end{aligned}
\end{equation*}
is  the \emph{domain of the flow of $X$} and
the \emph{flow of $X$} is the map
\begin{equation*}
\begin{aligned}
\varphi_X \colon D(X) &\to M \\ (t, x) &\mapsto \gamma^X_x(t).
\end{aligned}
\end{equation*}
For each $t \in \mathbb{R}$, we define the open set
\begin{equation*}
\begin{aligned}
U^X_t := \{ x \in M \mid (t,x) \in D(X) \},
\end{aligned}
\end{equation*}
consisting of points whose integral curves are defined at time $t$. For a fixed
$t\in \R$, the \emph{flow operator} is the map
\begin{equation*}
\begin{aligned}
\Phi^t_X \colon U^X_t \to U^X_{-t}, \quad x \mapsto \varphi_X(t,x),
\end{aligned}
\end{equation*}
which is a diffeomorphism. These operators satisfy the usual composition rule:
\begin{equation*}
\begin{aligned}
\Phi^t_X \circ \Phi^s_X = \Phi^{t+s}_X \quad \text{on } U^X_s \cap U^X_{s+t}.
\end{aligned}
\end{equation*}

\subsection{Natural bundles}
A systematic treatment of natural bundles can be found in~\cite[Ch.~IX]{kolar}.
Here we will just recall the definition of natural bundles and their properties
that will be needed later.
It is important to mention that tensor bundles are instances of natural
bundles.

Write $\mf$ for the category of smooth manifolds with smooth maps between them
and $\mfn$   for its subcategory of manifolds  of dimension $n$
with local diffeomorphisms between them as morphisms. Denote by
$\mathcal{E}_n$ the embedding
of $\mfn$ into $\mf$.  One of the possible equivalent definitions of a natural
bundle is to say that a \emph{natural bundle}
is a pair $(F,p)$, where $F\from \mfn \to \mf$ is a functor
and $p \from  F \to \mathcal{E}_n$ is a natural transformation such that
\begin{enumerate}[(a)]
\item for each $M \in \mfn$ the map $p_{M} \from F(M) \to M$ turns $F(M)$
into a fiber bundle over $M$;
\item for each $f \from M \to N$ in $\mfn$ the map
$F(f) \from F(M) \to F(N)$ is a fiberwise diffeomorphism that covers
$f \from M \to N$, i.e.\ the commutative diagram
\begin{equation}
\label{cd:naturalbundle}
\begin{aligned}
\begin{gathered}
\xymatrix{
F(M) \ar[r]^{F(f)} \ar[d]_{p_M}& F(N) \ar[d]^{p_N} \\
M \ar[r]^{f} & N
}
\end{gathered}
\end{aligned}
\end{equation}
is a pull-back square\footnote{Natural transformations with this property are
known as \emph{Cartesian natural transformations} in Category Theory.}.
\end{enumerate}
A \emph{natural map}
from a  natural bundle $(F,p)$ and to a natural bundle $(F',p')$
is a natural transformation of functors $\eta \from F \to F'$ such that
$p = p' \circ \eta$.
It is customary to refer to a natural bundle just by its functor part $F$.

One defines a \emph{natural vector bundle} as a natural bundle $(F,p)$ such that each
$p_M \from F(M) \to M$ is a vector bundle and for each  $f \from M \to N$ in
$\mfn$ the map $F(f) \from F(M) \to F(N)$ is a morphism of vector bundles.
Similarly, one defines \emph{natural principal bundles}.

One of the key features of natural bundles is the existence of canonical lifts
of vector fields. However, a clear statement and proof of the existence of
canonical lifts for non-complete vector fields are difficult to find in the
literature. For the sake of rigor, we include the following proposition.
\begin{proposition}
\label{propcanonicallift}
Let $(F,p)$ be a natural bundle on $\mfn$.
For every $M \in \mfn$ there is a unique well-defined map
\begin{equation*}
\begin{aligned}
\mathcal{F} \from \vf(M) & \to \vf(F(M)) \\
X & \mapsto \mathcal{F}(X)
\end{aligned}
\end{equation*}
such that
for each $X \in \vf(M)$
\begin{enumerate}[(i)]
\item the vector field $\mathcal{F}(X)$ is a lift of
$X$ (called \emph{the canonical lift of $X$});
\item for all $x \in M$ and $y \in F(M)_x$ the intervals $J^X_x$ and
$J^{\mathcal{F}(X)}_y$
coincide;
\item for each $t \in \R$, the image of the embedding $F(U^X_t \hookrightarrow
M) \from F(U^X_t) \to F(M)$ coincides
with $U_t^{\mathcal{F}(X)}$ and
the diagram
\begin{equation}
\label{cdcharcterizingcanonicallift}
\begin{aligned}
\begin{gathered}
\xymatrix@C4em{
F(U_t^X) \ar[r]^{ F(\Phi_X^t) } \ar[d]_{F(U^X_t \hookrightarrow M)} &
F(U_{-t}^X) \ar[d]^{F(U^X_{-t} \hookrightarrow M)} \\
U^{\mathcal{F}(X)}_t \ar[r]^{\Phi_{\mathcal{F}(X)}^t} &
U^{\mathcal{F}(X)}_{-t}.
}
\end{gathered}
\end{aligned}
\end{equation}
commutes.
\end{enumerate}

\end{proposition}
\begin{proof}
Let $M \in \mfn$ and $X \in \vf(M)$.
The tricky point is that the family of diffeomorphisms $F(\Phi_X^t)$ is not a
$1$-parameter group on $F(M)$. Indeed, for a fixed $t$, the map $F(\Phi_X^t)$ is a
diffeomorphism between $F(U_t^X)$ and $F(U_{-t}^X)$, but $F(U_t^X)$ and
$F(U_{-t}^X)$ are not open subsets of $F(M)$ for a general natural bundle
$F$. To overcome this peculiarity we use the fact that every
natural bundle is isomorphic to an associated bundle of a suitable
jet bundle.

For $M \in \mfn$ and $k\ge 1$, write $\inv{J^k_0(\R^n,M)}$ for the space of invertible $k$-jets at
$0$ of smooth maps from $\R^n$ to $M$.
Denote by $G_n^k$ the subspace of $\inv J^k_0(\R^n,\R^n)$ consisting of jets that have
value $0$ at $0$. Then $G_n^k$ is a Lie group under composition of jets and $\inv J^k_0(\R^n,M)$ is a
principal $G_n^k$-bundle.

Given a smooth map $\psi \from \R^n \to M$ write $j^k(\psi)$ for the $k$-th jet of
$\psi$ at $0$.
Given a local diffeomorphism $f \from M \to N$,
we define
\begin{equation*}
\begin{aligned}
\inv J^k_0(\R^n,f) \from \inv J^k_0(\R^n,M) & \to \inv J^k_0(\R^n,N) \\
j^k_0(\psi) &\mapsto   j^k(f \circ \psi) .
\end{aligned}
\end{equation*}
This turns the functor $\inv J^k_0(\R^n,-)$ into a natural principal $G_n^k$-bundle on $\mfn$.

One can check that for every manifold $N$ equipped with a smooth $G_n^k$-action
the functor $\inv J^k_0(\R^n,-)\gnktimes N$ is a natural bundle.
It was proved
in~\cite{epstein} that every natural bundle is isomorphic to a natural bundle of
the form $\inv J^k_0(\R^n,-)\gnktimes N$.

Now, let
$k$ be a natural number  and $N$ a manifold
 equipped with
a smooth left $G_n^k$-action such that $F$ is isomorphic to $H= \inv J^k_0(\R^n,-)
\gnktimes N$.
We will show that there is $\mathcal{H} \from \vf(M) \to \vf(H(M))$ that
satisfies the properties stated in the proposition with $F$ and
$\mathcal{F}$ replaced by
$H$ and $\mathcal{H}$, respectively.
Then $\mathcal{F}(X)$ is obtained from $\mathcal{H}(X)$ via a natural isomorphism between
$F(M)$ and $H(M)$. The stated properties for $\mathcal{F}$ follow from the
corresponding properties for $\mathcal{H}$, since the isomorphisms are natural.

To get a vector field $\mathcal{H}(X)$ on $H(M)$, we construct its
flow on $H(M)$.
Write $\sigma_t$ for the embedding of $U^X_t$ into $M$.
Define the open subsets $\widetilde{U}_t$ of $H(M)$ by
\begin{equation*}
\begin{aligned}
\widetilde{U}_t = \left\{\, [j^k(\psi),y]\in H(M) \,\middle|\, \im(\psi) \subset U^X_t
\right\}
\end{aligned}
\end{equation*}
and the diffeomorphisms $\widetilde{\Phi}^t \from \widetilde{U}_t \to
\widetilde{U}_{-t}$ by
\begin{equation*}
\begin{aligned}
\widetilde{\Phi}^t ([j^k(\psi), y]) = [ j^k (\Phi_X^t \circ \psi), y].
\end{aligned}
\end{equation*}
It is easy to check that $\widetilde{\Phi}^t$ is a local flow.
We define $\mathcal{H}(X)$ as the unique vector field on $H(M)$ such that
$\Phi_{\mathcal{H}(X)}^t (z) = \widetilde{\Phi}^t(z)$ for all $z \in
\widetilde{U}_t$.
Write $p$ for the canonical projection from $H(M)$ to $M$.
Thus $p([j^k(\psi),y]) = \psi(0)$ for all $[j^k(\psi), y] \in H(M)$.
Then for all $x \in M$ and $[j^k(\psi), y]\in H(M)_x \cap \widetilde{U}_t $, we get
\begin{equation*}
\begin{aligned}
p( \Phi_{\mathcal{H}(X)}^t ([j^k(\psi), y])) =
p( [j^k(\Phi_X^t \circ \psi), y])
= (\Phi_X^t \circ \psi) (0) = \Phi_X^t (x).
\end{aligned}
\end{equation*}
Thus $\mathcal{H}(X)$ is a lift of $X$.
Moreover, $\widetilde{U}_t \subset U_t^{\mathcal{H}(X)}$, which implies that
$\bigcup_{t\in \R} \{t\} \times \widetilde{U}_t \subset D(\mathcal{H}(X))$.
Thus, for every $x \in M$ and $z \in H(M)_x$ the interval
$J^{\mathcal{H}(X)}_z$ contains the interval $J^X_x$. But, since
$\mathcal{H}(X)$ is a lift of $X$, also $J^{\mathcal{H}(X)}_z \subset J^X_x$.
Thus $J^{\mathcal{H}(X)}_z = J^X_x$ as claimed.
We also conclude that $U^{\mathcal{H}(X)}_t = \widetilde{U}_t$ for all
$t\in \R$.

It is clear that if $\psi'\from \R^n \to U^X_t$ then $\sigma_t \circ \psi'
\from \R^n \to M$ has its image in $U^X_t$. Also for every $\psi \from \R^n \to
M$ with $\im(\psi) \subset U^X_t$ there is a unique $\psi' \from \R^n \to
U^X_t$ such that $\psi = \sigma_t \circ \psi'$. This shows that
$\widetilde{U}_t$, and hence also $U^{\mathcal{H}(X)}_t$,  is the image of $H(\sigma_t)$.
The commutativity of the diagram~\eqref{cdcharcterizingcanonicallift} follows
from a straightforward computation.
\end{proof}
Let $\eta \from F' \to F$ be a natural map between natural bundles on $\mfn$ and $M\in
\mfn$. Then for every $X \in
\vf(M)$, the vector fields $\mathcal{F}'(X)$ and $\mathcal{F}(X)$ are
$\eta_M$-related.  To see this, let $x \in M$
 and $y \in F'(M)_x$. We have
\begin{equation}
\label{tetamfprimexy}
\begin{aligned}
T\eta_M (\mathcal{F}'(X)_y) = \ddt{t=0} \eta_M \left(
\Phi^t_{\mathcal{F}'(X)}y \right) \\
\mathcal{F}(X)_{\eta_M(y)} = \ddt{t=0} \Phi^t_{\mathcal{F}(X)} (\eta_M(y)).
\end{aligned}
\end{equation}
For $t \in \R$, write $\sigma_t$ for the embedding $U_t^X \hookrightarrow
M$. Then $F'(\sigma_t)$ is a diffeomorphism between $F'(U_t^X)$ and
$U_t^{\mathcal{F}'(X)} \subset F'(M)$.
Denote by $y_t$ the unique element of $F'(U_t)$ such that $F'(\sigma_t)
(y_t) = y$. Using the commutativity of~\eqref{cdcharcterizingcanonicallift} and
the naturality of
$\eta_M$, we get
\begin{equation*}
\begin{aligned}
\eta_M(\Phi_{\mathcal{F}'(X)}^t y ) & =
\eta_M(\Phi_{\mathcal{F}'(X)}^t \circ F'(\sigma_t) (y_t) )  =
\eta_M(  F'(\sigma_{-t}) \circ F'(\Phi_{X}^t)(y_t) ) \\& =
  F(\sigma_{-t}) \circ F(\Phi_{X}^t) (\eta_{U_t}(y_t) )  =
 \Phi_{\mathcal{F}(X)}^t \circ F(\sigma_{t}) (\eta_{U_t}(y_t) ) \\& =
 \Phi_{\mathcal{F}(X)}^t (\eta_{M}(F'(\sigma_t)(y_t)) )  =
 \Phi_{\mathcal{F}(X)}^t (\eta_{M}(y)).
\end{aligned}
\end{equation*}
Thus~\eqref{tetamfprimexy} implies that
\begin{equation}
\label{liederivativeofnaturalmapiszero}
\begin{aligned}
T\eta_M (\mathcal{F}'(X)_y) = \mathcal{F}(X)_{\eta_M(y)},
\end{aligned}
\end{equation}
i.e.\ the vector fields $\mathcal{F}'(X)$ and $\mathcal{F}(X)$ are
$\eta_M$-related.
\section{Derivatives}
\label{sec:alphaderivatives}
We start with a discussion of what one might expect from a notion of
derivative.
 Given a map $h$ between two smooth manifolds, the tangent map  $Th$
captures
the first-order behavior of $h$. The drawback of $Th$  is
that it retains $h$ inside it. A metamathematical idea  of a derivative of
$h$ would be $Th$ with $h$ stripped out and only the first-order
information retained.
It is evident how to do this in the case of $\R$-valued smooth functions.
Namely, given a smooth function $f \from M \to \R$ the  derivative  $df\from TM
\to \R$ is determined by
\begin{equation*}
\begin{aligned}
 (T_x f ) (X) = df(X)\ddt{t=f(x)}.
\end{aligned}
\end{equation*}
In other words $df = dt \circ Tf$. The key point of the
definition is the existence of the canonical nowhere zero $1$-form $dt$ on $\R$.
More generally,
let $N$ be an $n$-dimensional manifold and $\omega\from TN \to \R^n$ a $1$-form
on $N$ such that $\omega_y \from T_y N  \to \R^n$ is an isomorphism for every
$y \in N$.
Then for a smooth map $h \from M \to N$, one can define its derivative as~$\omega\circ Th$.
Notice that a form $\omega$ with the above property exists if and only if
$N$ is a parallelizable manifold. For this reason we call such a form
\emph{a parallelization form}. It corresponds to the \emph{global coframe} $(\pr_1\circ\omega, \dots, \pr_n\circ\omega)$.

Of course, $\omega \circ Th$ depends on the choice of the parallelization
form $\omega$. For example, in the case $N$ is  a Lie group $H$, there are two natural
choices for $\omega$: either left-invariant  or right-invariant
Maurer-Cartan form.
In this case the resulting derivative is called \emph{left (right) Darboux
derivative}, respectively.

Another standard example of a derivative is the \emph{covariant derivative} of a
section $s\from M \to P\gtimes V$, where $P$ is a principal $G$-bundle with a
fixed
principal connection and
$V$ is a $G$-module.
In \cite{kolar_article}, Jany\v{s}ka and Kol\'{a}\v{r} developed a general theory of Lie
derivatives
that includes as special cases the usual Lie derivatives and covariant
derivatives.

Below, we introduce the notion of $\alpha$-derivatives that captures the idea of derivative
explained above.
For this, we use the following auxiliary construction.

Let $M_1$ and $M_2$ be smooth manifolds and $h\from M_1 \to M_2$ a smooth map.
For a pair of vector fields $X_1 \in \vf(M_1)$, $X_2 \in \vf(M_2)$, we call the map
\begin{equation*}
\begin{aligned}
\trlift_{(X_1,X_2)}h  \from M_1 &\to TM_2\\
x & \mapsto (T_x h)(X_1)-(X_2)_{h(x)}.
\end{aligned}
\end{equation*}
\emph{the Trautman lift of $h$}.
Alternatively (cf.~\cite[Lemma~47.2]{kolar}), we have
\begin{equation}
\label{genliederviaflows}
\begin{aligned}
(\trlift_{(X_1,X_2)}h)_x = \ddt{t=0} \Phi^{-t}_{X_2} \circ h \circ
\Phi^t_{X_1} (x).
\end{aligned}
\end{equation}
\begin{remark}
The map $\trlift_{(X_1,X_2)}h$ was initially introduced by Trautman
in~\cite{trautman} without a specific name. It was later referred to as the
``generalized Lie derivative''
 in a series of articles by Kol\'{a}\v{r} and his coauthors, with the main
results summarized in \cite[Ch.~XI]{kolar}. However, we have chosen to deviate
from this terminology because ``generalized Lie derivatives'' do not align with
our concept of a derivative and never specialize to Lie derivatives. Instead, they serve as an intermediate step in the construction of Lie derivatives.
\end{remark}
The Trautman lift is particularly useful when  $h: \fb_1 \to \fb_2$ is a map of
fiber bundles over a manifold $M$ and
 $X_1$, $X_2$ are lifts of the same
vector field on $M$.
In this case, the values of $\trlift_{(X_1,X_2)}h$ lie
in the vertical subbundle $\vb\fb_2$ of the tangent bundle $T\fb_2$.
\begin{definition}
Let $h \from \fb_1 \to \fb_2$ be a bundle morphism over $M$ and $(X_1, X_2)$
a pair of lifts of the same vector field on $M$ to $\fb_1$ and to $\fb_2$,
respectively.
For a vector bundle $E$ over $M$ and a vector bundle map $\alpha\from
\vb\fb_2 \to E$ that covers the projection $\fb_2 \to M$, we denote the
composite
\begin{equation*}
\begin{aligned}
\alpha \circ \trlift_{(X_1,X_2)} h \from \fb_1 \to E
\end{aligned}
\end{equation*}
by $\lie^{\alpha}_{(X_1,X_2)} h$ and call it the \emph{$\alpha$-derivative of
$h$ along $(X_1,X_2)$}.
\end{definition}

\begin{remark}
The $\alpha$-derivative carries all the first-order information about $h$ provided that, for each base point $x \in M$ and each $y \in \fb_{2,x}$, the map
\begin{equation}
\alpha_y : \vb_y \fb_2 \to E_x
\end{equation}
is injective.

A potential issue arises if the images of $\alpha_y$ vary with the choice of $y$
inside the same fiber $\fb_{2,x}$. In that case, the $\alpha$-derivative may
contain \emph{more} than just differential information: it might even allow one
to recover the actual values of $h$ in the fiber.

To avoid this, one can assume that, for each $x \in M$, the image of $\alpha_y$
is the same for all $y \in \fb_{2,x}$. Imposing injectivity and this
independence of the image ensures that $\operatorname{Im} \alpha$ forms a vector
subbundle of $E$.
Thus, the notion of $\alpha$-derivative fully aligns with the intended
philosophy of a derivative if and only if $\alpha$ induces a fiber-wise
isomorphism onto a subbundle of $E$. Of course, by replacing $E$ with this
subbundle, nothing is lost.

In all instances we are aware of where the $\alpha$-derivative specializes to a
previously studied derivative (e.g., Lie or covariant derivative), the
corresponding choice of $\alpha$ is a fiberwise isomorphism onto $E$. We have
nevertheless not imposed this assumption in the definition, since doing so would
prevent a clean formulation of the Leibniz rule for $\alpha$-derivatives.
\end{remark}

A family of suitable choices of $\alpha$ can be obtained using vertical
splittings.
A \emph{vertical splitting} for a fiber bundle $\fb_2$ over $M$ is an isomorphism
of vector bundles $\beta \from \vb \fb_2 \to \fb_2 \times_M E $ over $\fb_2$ for some vector bundle
$E$ over $M$.
Then $\alpha = \pr_2 \circ \beta$ is a vector bundle map from $\vb\fb_2$ to
$E$  that covers the
projection $\fb_2 \to M$ such that the maps $\alpha_y$ are isomorphisms.

If $\fb_2$ is a vector bundle on its own, then the inverse
of
the \emph{vertical lift}
\begin{equation*}
\begin{aligned}
\vl_{\fb_2} \from \fb_2 \times_M \fb_2 & \to \vb\fb_2 \\
(u,v) & \mapsto \ddt{t=0}(u + vt)
\end{aligned}
\end{equation*}
is an example of vertical splitting. The composite $\pr_2\circ
\vl_{\fb_2}^{-1}$ is called the \emph{vertical projection} and is denoted by
$\vpr_{\fb_2}$.
The $\vpr_{\fb_2}$-derivative $\lie^{\vpr_{\fb_2}}_{(X_1,X_2)} h$ is called the Lie
derivative of $h$ with respect to $X_1$ and $X_2$ in~\cite[Section~47.7]{kolar}.
This derivative was introduced by Jany\v{s}ka and Kol\'{a}\v{r}
in~\cite{kolar_article}.

Now, we return to the context of a general fiber bundle $\fb_2$, but assume that
$\fb_1 = M$ with the projection map $\id_M$.
In this setup $h \from M \to \fb_2$ is just a section of $\fb_2$.
If $X_2 \in \vf(\fb_2)$ is a projectable vector field lifting $X \in
\vf(M)$ and $\beta \from \vb\fb_2 \to \fb_2 \times_M E$ is a vertical
splitting, then the $(\pr_2\circ \beta)$-derivative of $h$ along $(X,X_2)$ was
called the (restricted) Lie derivative of $h$ with respect to $X_2$
by Godina and Matteucci in~\cite{godina1,godina2}.

The Lie derivatives of Janyška-Kolář and of Godina-Matteucci can be further
specialized to get classical Lie and covariant derivatives.
\begin{example}[\emph{Lie derivative}]
\label{liederivativeexample}
Suppose $F$ is a
natural bundle and $E$ a natural vector bundle on $\mfn$.
Given $M \in \mfn$, a vector field $X \in \vf(M)$ and a smooth map $h \from
F(M) \to E(M)$, \emph{the Lie derivative $\lie_X h$ of $h$ along $X$} is
the $\vpr_{E(M)}$-derivative of $h$ along $(\mathcal{F}(X),
\mathcal{E}(X))$. It coincides with the Lie derivative defined in textbooks in
the case both $F(M)$ and $E(M)$ are tensor bundles over $M$.
\end{example}
\begin{example}[\emph{Covariant derivative}]
\label{covariantderivativeexample}
If $E$ is a vector bundle equipped with a linear connection
then the covariant derivative $\cov_X s$ of a section $s \from M \to E$ is the
$\vpr_E$-derivative of $s$ along $(X,X^\cov)$, where $X^\cov$ is the horizontal lift
of $X \in \vf(M)$ (see \cite[Section 47.5]{kolar}).
\end{example}

Despite the generality of $\alpha$-derivatives,
it is still possible to obtain some results for them. Namely, we will show
that there is a version of Leibniz rule for an $\alpha$-derivative of a
composition.

We start with the Leibniz rule for the Trautman lift of a composition.
The following proposition follows directly from the definition of the Trautman lift.
\begin{proposition}
\label{liederivativecomposition}
Let $M_1$, $M_2$, $M_3$ be manifolds and $X_i \in \vf(M_i)$, $i=1$, $2$,
$3$.
For any smooth maps $h_1 \from M_1 \to M_2$, $h_2 \from M_2 \to M_3$, we have
\begin{equation}
\label{genliderLeibniz}
\begin{aligned}
\trlift_{(X_1,X_3)} (h_2 \circ h_1) = Th_2 \circ \trlift_{(X_1,X_2)} h_1 +
(\trlift_{(X_2,X_3)}h_2) \circ h_1.
\end{aligned}
\end{equation}
\end{proposition}
Notice that the vector field ${X}_2$ is not present on the left-hand
side of~\eqref{genliderLeibniz} and thus can be chosen arbitrarily on
the right-hand side.

The above proposition immediately implies the following result.
\begin{corollary}
\label{alphaLeibniz}
Let $\fb_1$, $\fb_2$ and $\fb_3$ be fiber bundles over a manifold
$M$ and $h_1\from \fb_1 \to \fb_2$, $h_2 \from \fb_2 \to \fb_3 $ fiber
bundle maps. Suppose $\alpha \from \vb\fb_3 \to E$ is a vector bundle map
covering the projection from $\fb_3$ to $M$. Write $h_2^* \alpha$ for the
composite of $\alpha$ with the restriction of $Th_2$ to $\vb\fb_2$. It is a
vector bundle map covering the projection from $\fb_2$ to $M$. For every triple
$(X_1,X_2,X_3) \in \vf(\fb_1)\times \vf(\fb_2) \times \vf(\fb_3)$ of
projectable  vector
fields lifting the same vector field on $M$, we get
\begin{equation}
\label{eq:alphaLeibniz}
\begin{aligned}
\lie^{\alpha}_{(X_1,X_3)} (h_2 \circ h_1) = \lie^{h_2^*\alpha}_{(X_1,X_2)} h_1
+ (\lie^{\alpha}_{(X_2,X_3)}h_2) \circ h_1.
\end{aligned}
\end{equation}
\end{corollary}

\section{ Darboux-Lie derivatives}\label{sec:darboux}
The peculiarity of $\alpha$-derivative is that it is defined along pairs of vector
fields on fiber bundles. This is a striking contrast with classical derivatives
that are defined along vector fields on the base manifold.
The utility of $\alpha$-derivative, as demonstrated
by Examples~\ref{liederivativeexample} and \ref{covariantderivativeexample},  lies in the fact
that usual derivatives can
be expressed in terms of $\alpha$-derivative. Thus $\alpha$-derivatives provide a
framework to introduce new types of derivatives. By
specializing the notion of $\alpha$-derivative, we introduce
the notion of Darboux-Lie derivative taken
along invariant vector fields on a principal bundle over the base manifold.
We accompany
this notion by the notion of covariant Darboux-Lie
derivative taken along vector fields on the base
manifold, but additionally depending on the choice of a principal connection.


Let
$G$ be a  Lie group, $P$ a  principal $G$-bundle  and $N$  a manifold
equipped with a smooth left $G$-action.
We will identify $\vb (P\gtimes N)$ with $P\gtimes TN$ via
the isomorphism
\begin{equation}
\label{nuiso}
\begin{aligned}
\nu \from \vb(P\gtimes N) &\to P\gtimes TN\\
\ddt{t=0}[\gamma_P(t),\gamma_N(t)]&\mapsto
\left[\gamma_P(0),\ddt{t=0}(\gamma_P(0)\bs\gamma_P(t))\gamma_N(t)\right],
\end{aligned}
\end{equation}
where $\gamma_P$ is a vertical curve in $P$ and $\gamma_N$ is an arbitrary
curve in $N$, both defined on the same open interval containing $0$.

Suppose $\widetilde{X}$ is a $G$-invariant vector field on the
principal $G$-bundle  $P$.
Write $\widetilde{X}_N$ for the vector field on $P\gtimes N$, whose
flow is given by
$\Phi_{\widetilde{X}_N}^t [ p, y] = [ \Phi_{\widetilde{X}}^t p , y]$.
Notice that
the vector fields $\widetilde{X}$  and $\widetilde{X}_N$ are both projectable and
correspond to the same vector field $X$ on $M$.

Let $F$ be a natural bundle on $\mfn$.
Recall that $\mathcal{F}(X)$ is the canonical lift of
$X$ to $F(M)$, whose existence is proved in Proposition~\ref{propcanonicallift}.
For a fiber bundle map $h\from F(M) \to P\gtimes N$,
we denote the Trautman lift $\trlift_{(\mathcal{F}(X),
\widetilde{X}_N)}h$  by  $\tilde\rlie_{\widetilde{X}} h $.

Let $V$ be a $G$-module. For a $G$-equivariant $1$-form $\omega \from TN \to V$
define
\begin{equation*}
\begin{aligned}
\id \gtimes \omega \from P \gtimes TN & \to P \gtimes V \\
 [ p, Y] & \mapsto [p, \omega(Y)].
\end{aligned}
\end{equation*}
It is clear that $\id\gtimes \omega$ is a vector bundle map covering the
projection $P\gtimes N \to M$.
\begin{definition}
Let $F$ be a natural bundle on $\mfn$, $M$ an $n$-dimensional smooth manifold, $P$ a principal
$G$-bundle, and $N$ a smooth manifold equipped with a $G$-action.
Suppose $\omega \from TN \to V$ is a $G$-equivariant $1$-form and
$\widetilde{X} \in \vf(P)^G$. For a bundle map
$h\from F(M) \to P\gtimes N$, the \emph{Darboux-Lie derivative}
\begin{equation*}
\begin{aligned}
\rlie_{\widetilde{X}}^{\omega} h \from F(M) \to P\gtimes V
\end{aligned}
\end{equation*}
\emph{of $h$ along $\widetilde{X}$ with respect to $\omega$} is the
$\alpha$-derivative $\lie_{(\mathcal{F}(X),
\widetilde{X}_N)}^{\alpha}  h$ with $\alpha = (\id \gtimes \omega) \circ \nu$,
i.e.\ $\rlie_{\widetilde{X}}^{\omega} h = (\id \gtimes \omega) \circ \nu \circ
\tilde\rlie_{\widetilde{X}} h$.
\end{definition}
\begin{definition}
If $P$ is additionally equipped with a $G$-principal connection, we define
the \emph{covariant Darboux-Lie derivative} $\lie^{\omega,\cov}_X h$ of
$h$ along $X \in \vf(M)$ with respect to $\omega$ to be $\rlie_{X^H}^\omega h$,
where $X^H$ is the horizontal lift of $X$ to $P$.
\end{definition}

If $N$ is a $G$-module $V$,  then there is the canonical $G$-equivariant  parallelization form
$\vpr_V := \pr_2 \circ \vl_V^{-1}$ on
$V$, where
\begin{equation*}
\begin{aligned}
\vl_V \from V \oplus V & \to TV \\
(v,w) & \mapsto \ddt{t=0}( v + wt).
\end{aligned}
\end{equation*}
\begin{remark}
\label{calculusderivative}
If $\gamma$ is a curve in $V$, then $\vpr_V(\dot\gamma(0))$ coincides with the
derivative of $\gamma(t)$ at $0$ as it is usually defined in standard Calculus
courses.
\end{remark}
Another interesting case is when $N$  coincides with  $G$ considered with
the conjugation action.
In this case, we can take $\omega$ to be the left-invariant Maurer-Cartan form
on $G$, defined by
\begin{equation*}
\begin{aligned}
\omega_{MC} \from TG & \to \glie = T_e G\\
T_g G \ni Z & \mapsto (T_e L_g)^{-1} (Z).
\end{aligned}
\end{equation*}
In these two cases we will skip  $\omega$ in $\rlie_{\widetilde{X}}^\omega$ and
write $\rlie_{\widetilde{X}}h$ instead of $\rlie^{\vpr_V}_{\widetilde{X}}h$ or
$\rlie^{\omega_{MC}}_{\widetilde{X}}h$.
Similarly, we write $\lie_X^{\cov} h$ for $\lie_X^{\vpr_V,\cov}h$ and for
$\lie_X^{\omega_{MC},\cov}h $.

The Darboux-Lie derivatives
$\rlie^{\omega}_{\widetilde{X}}$ can be expressed in terms of flows.
\begin{proposition}
\label{proprlieomegaflow}
Let $h \from F(M) \to P\gtimes N$ be a fiber bundle map over $M$.
For all $x \in M$, $p \in P_x$ and $y\in F(M)_x$
\begin{equation}
\label{rlieomegaviaflows}
\begin{aligned}
\rlie_{\widetilde{X}}^\omega h(y) = \left[   p, \omega\left(\ddt0 (\Phi_{\widetilde{X}}^t p) \bs h (
\Phi_{\mathcal{F}(X)}^t(y)) \right)
\right] \in P\gtimes V.
\end{aligned}
\end{equation}
\end{proposition}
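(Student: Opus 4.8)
The plan is to evaluate the composite that defines $\rlie_{\widetilde{X}}^\omega h=\lie_{(\mathcal{F}(X),\widetilde{X}_N)}^\omega h$ at the point $y$ directly: first apply the flow description~\eqref{genliederviaflows} of the Trautman lift, then the identification $\vb(P\gtimes N)\cong P\gtimes TN$, then the map $\id\gtimes\omega$. The whole argument is made transparent by choosing the representative curve $t\mapsto\Phi^t_{\widetilde{X}}p$ for the relevant curve in $P\gtimes N$.

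First I would fix $x\in M$, $p\in P_x$ and $y\in F(M)_x$. Since $\widetilde{X}\in\vf(P)$ is $G$-invariant it projects onto $X$, while $h$ and $\mathcal{F}(X)$ both cover $X$; moreover the flows of $\widetilde{X}$ and of $\mathcal{F}(X)$ are lifts of $\Phi^t_X$ with integral-curve domains matching those of $\Phi^t_X$, so that $t\mapsto\Phi^t_{\widetilde{X}}p$ and $t\mapsto h(\Phi^t_{\mathcal{F}(X)}(y))$ are both defined for $t\in J^X_x$ and lie over $\Phi^t_X(x)\in M$. Consequently $\gamma_N(t):=(\Phi^t_{\widetilde{X}}p)\bs h(\Phi^t_{\mathcal{F}(X)}(y))$ is a smooth curve in $N$, and, as $\Phi^t_{\widetilde{X}}p$ and $h(\Phi^t_{\mathcal{F}(X)}(y))$ both lie over $\Phi^t_X(x)$, one has $h(\Phi^t_{\mathcal{F}(X)}(y))=[\Phi^t_{\widetilde{X}}p,\gamma_N(t)]$. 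In particular $\gamma_N(0)=p\bs h(y)$, so $[p,\gamma_N(0)]=h(y)$.

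Next I would invoke~\eqref{genliederviaflows} together with $\Phi^{-t}_{\widetilde{X}_N}[p',y']=[\Phi^{-t}_{\widetilde{X}}p',y']$ to get
\[
\ddt{t=0}\Phi^{-t}_{\widetilde{X}_N}\bigl(h(\Phi^t_{\mathcal{F}(X)}(y))\bigr)=\ddt{t=0}\bigl[\Phi^{-t}_{\widetilde{X}}\Phi^t_{\widetilde{X}}p,\gamma_N(t)\bigr]=\ddt{t=0}[p,\gamma_N(t)]\in\vb(P\gtimes N)_{h(y)}.
\]
For the curve $t\mapsto[p,\gamma_N(t)]$ the $P$-component is the constant $p$, so $p\bs p=e_G$ and the isomorphism $\vb(P\gtimes N)\xrightarrow{\cong}P\gtimes TN$ sends the above vertical vector to $\bigl[p,\ddt{t=0}\gamma_N(t)\bigr]$. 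Applying $\id\gtimes\omega$ finally gives
\[
\rlie_{\widetilde{X}}^\omega h(y)=\Bigl[p,\,\omega\Bigl(\ddt{t=0}(\Phi^t_{\widetilde{X}}p)\bs h(\Phi^t_{\mathcal{F}(X)}(y))\Bigr)\Bigr],
\]
which is~\eqref{rlieomegaviaflows}.

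I do not expect a serious obstacle here: the key idea is simply to lift the base curve $t\mapsto\Phi^t_X(x)$ to $P$ via the flow of $\widetilde{X}$ itself, which makes the $P$-factor constant, so that the vertical identification degenerates to the derivative in the $N$-direction and $\Phi^{-t}_{\widetilde{X}_N}$ cancels $\Phi^t_{\widetilde{X}}$ on the nose. The only steps needing a word of justification are the smoothness and common domain of the curve $t\mapsto\Phi^t_{\widetilde{X}}p$ (from $J^{\widetilde{X}}_p=J^X_x=J^{\mathcal{F}(X)}_y$ and smoothness of the division maps on $P\times_M P$ and on $P\times_M(P\gtimes N)$) and checking that the displayed formula for $\vb(P\gtimes N)\cong P\gtimes TN$ applies verbatim to the specific vertical curve $t\mapsto[p,\gamma_N(t)]$.
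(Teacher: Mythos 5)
Your proposal is correct and follows essentially the same route as the paper's proof: rewrite $h(\Phi^t_{\mathcal{F}(X)}(y))$ as $[\Phi^t_{\widetilde{X}}p,(\Phi^t_{\widetilde{X}}p)\bs h(\Phi^t_{\mathcal{F}(X)}(y))]$ so that $\Phi^{-t}_{\widetilde{X}_N}$ makes the $P$-component constantly $p$, then pass the derivative to the $N$-factor via the vertical identification and apply $\id\gtimes\omega$. The extra remarks on domains and smoothness are fine but not needed beyond what the paper already assumes.
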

\begin{proof}
From~\eqref{genliederviaflows}, we get
\begin{equation*}
\begin{aligned}
\tilde\rlie_{\widetilde{X}} h (y) & =
 \ddt0 \Phi_{\widetilde{X}_N}^{-t} \circ h \circ \Phi_{\mathcal{F}(X)}^t (y)
\\& =
 \ddt0 (\Phi_{\widetilde{X}}^{-t} \gtimes \id) [  \Phi_{\widetilde{X}}^t p, (\Phi_{\widetilde{X}}^t p) \bs h ( \Phi_{\mathcal{F}(X)}^t (y))] \\& =
 \ddt0  [   p, (\Phi_{\widetilde{X}}^t p) \bs h ( \Phi_{\mathcal{F}(X)}^t
(y))].
\end{aligned}
\end{equation*}
Now the result follows from
$\rlie^\omega_{\widetilde{X}}h = (\id \gtimes \omega) \circ \nu \circ
\tilde\rlie_{\widetilde{X}}h$.
\end{proof}
\section{Examples and relation to other definitions}\label{sec:examples}
Many known derivatives can be expressed in terms of
Darboux-Lie derivative.
\subsection{Lie derivative}
First we consider the case of Lie derivatives as defined
in Example~\ref{liederivativeexample}.
Let $F$ be a natural bundle and $E$ a natural vector bundle.
Then $E$ is isomorphic to, and thus can be replaced by, the functor $\inv
J^k_0(\R^n,-) \gnktimes V$ for a suitable $G^k_n$-module $V$.
For $X \in \vf(M)$, denote by $X^c$ the canonical lift of $X$ to $\inv
J^k_0(\R^n,M)$. Then $X^c$ is $G^k_n$-invariant, and $X^c_V$
coincides with the canonical lift $\mathcal{E}(X)$ of $X$ to $E(M)$.
Hence for $h\from F(M) \to E(M) = \inv J^k_0(\R^n,M) \gnktimes V$
\begin{equation}
\label{lieXheqrlieEXh}
\begin{aligned}
\lie_X h = \lie^{\vpr_{E(M)}}_{(\mathcal{F}(X),\mathcal{E}(X))} h=
\rlie^{\vpr_V}_{X^c}h.
\end{aligned}
\end{equation}
\subsection{Covariant derivative}
Let $M$ be a manifold and $E$ an $m$-dimensional vector bundle on $M$. For an
$m$-dimensional vector space $W$, we denote by $L(W,E)$ the bundle of
$W$-frames in $E$, thus $L(W,E)$ is the set of maps $f \from W \to E$ that
induce an isomorphism  between $W$ and $E_x$ for some $x \in
M$ depending on $f$.
The set $L(W,E)$ is a principal $\Gl(W)$-bundle over $M$ and $E$ is canonically isomorphic
to the associated bundle $L(W,E) \times_{\Gl(W)} W$ via
the evaluation map
\begin{equation*}
\begin{aligned}
\ev_E \from L(W,E) \times_{\Gl(W)} W & \to E \\
[f,w] &\mapsto f(w).
\end{aligned}
\end{equation*}
Let $\cov$ be a linear connection on $E$. For a vector field $X$ on M, denote
its
horizontal lift to $E$ by $X^\cov$. We get an induced connection on the
principal $\Gl(W)$-bundle $L(W,E)$, for which the horizontal lift $X^H$ of
$X$ has the
flow
\begin{equation*}
\begin{aligned}
\Phi_{X^H}^t \from L(W,E) & \to L(W,E) \\
f & \mapsto ( w \mapsto \Phi_{X^\cov}^t (f(w))).
\end{aligned}
\end{equation*}
We can express the covariant derivatives
from Example~\ref{covariantderivativeexample} in terms of
covariant-Darboux-Lie derivative.
\begin{proposition}
Let  $\cov$ be a linear
connection on $E$. If $h \from M \to E$ is a section of a vector bundle $E$,
then for every $X \in
\vf(M)$
\begin{equation}
\label{covariantasDarbouxLie}
\begin{aligned}
\cov_X  h= \ev_E \circ \lie_{X}^\cov (\ev_E^{-1} \circ h).
\end{aligned}
\end{equation}
\end{proposition}
\begin{proof}
As we already explained in Example~\ref{covariantderivativeexample}, we have
\begin{equation*}
\begin{aligned}
 \cov_X h = \lie^{\vpr_E}_{(X,X^\cov)} h = \vpr_E \circ \trlift_{(X,X^\cov)}h
 = \pr_2 \circ \vl_E^{-1} \circ \trlift_{(X,X^\cov)}h.
\end{aligned}
\end{equation*}
Hence, for every $x \in M$,
we get by~\eqref{genliederviaflows}
\begin{equation}
\label{covXhx}
\begin{aligned}
 \cov_X h  (x) & =  \vpr_E \left(
\ddt{t=0} \Phi_{X^\cov}^{-t} \circ h \circ \Phi_X^t(x))
\right)
\\& =  \vpr_E \left(
\ddt{t=0} \Phi_{X^\cov}^{-t} \circ h \circ \Phi_X^t(x)) \right).
\end{aligned}
\end{equation}
On the other hand
\begin{equation}
\label{lieXcovevEh}
\begin{aligned}
 \lie_{X}^\cov (\ev_E^{-1} \circ h) = (\id \times \vpr_W) \circ \nu \circ
\trlift_{(X,X^H_W)} (\ev_E^{-1} \circ h),
\end{aligned}
\end{equation}
where $\nu \from \vb (L(W,E) \gtimes W) \xrightarrow{\cong} L(W,E) \gtimes TW$
is given by~\eqref{nuiso}.
By~\eqref{genliederviaflows}, we get
\begin{equation}
\label{trliftXXHWevEhx}
\begin{aligned}
\trlift_{(X,X^H_W)} (\ev_E^{-1} \circ h) (x) = \ddt{t=0} \left(\Phi_{X^H_W}^{-t}
\circ \ev_E^{-1} \circ h \circ \Phi_X^t\right)(x).
\end{aligned}
\end{equation}
Write $c(t)$ for $\Phi_X^t(x)$.
Observe that $h( \Phi_X^t(x)) \in E_{c(t)}$ and $\Phi_{X^H}^t f \from W
\xrightarrow{\cong} E_{c(t)} $ is a frame at $c(t)$. Thence
\begin{equation*}
\begin{aligned}
\ev_{E}^{-1} \left( h \left( \Phi_X^t(x) \right) \right) = \left[
\Phi_{X^H}^t f, (\Phi_{X^H}^t f)^{-1} \circ h \circ \Phi_X^t(x)
\right].
\end{aligned}
\end{equation*}
Plug in this into~\eqref{trliftXXHWevEhx} and using the definition of $X^H_W$,
we get
\begin{equation*}
\begin{aligned}
\trlift_{(X,X^H_W)} (\ev_E^{-1} \circ h) (x) = \ddt{t=0}
\left[ f, (\Phi_{X^H}^t f)^{-1} \circ h \circ \Phi_X^t(x)
\right].
\end{aligned}
\end{equation*}
Applying~\eqref{nuiso} and~\eqref{lieXcovevEh}, we
get
\begin{equation*}
\begin{aligned}
 \lie_{X}^\cov (\ev_E^{-1} \circ h)(x) =
\left[ f, \vpr_W\left(\ddt{t=0} (\Phi_{X^H}^t f)^{-1} \circ h \circ \Phi_X^t(x)
\right) \right].
\end{aligned}
\end{equation*}
Therefore
\begin{equation}
\label{evElieXcovevEhx}
\begin{aligned}
\ev_E \left(\lie_{X}^\cov (\ev_E^{-1} \circ h) (x) \right)=
 f\left( \vpr_W\left(\ddt{t=0} (\Phi_{X^H}^t f)^{-1} \circ h \circ \Phi_X^t(x)
\right) \right).
\end{aligned}
\end{equation}
Write $\tilde{c}(t)$ for $h(\Phi_X^t(x))$. We have $\tilde{c}(t) \in
E_{c(t)}$. Since $\Phi_{X^H}^t f (w) = \Phi_{X^\cov}^t(f( w))$ for all $w \in W$,
we get
\begin{equation*}
\begin{aligned}
(\Phi_{X^H}^{t}f )^{-1} (\tilde{c}(t) ) & =
f^{-1} \left( \Phi_{X^\cov}^{-t} (\tilde{c}(t)) \right).
\end{aligned}
\end{equation*}
Therefore
\begin{equation}
\label{evEsecond}
\begin{aligned}
\ev_E \left(\lie_{X}^\cov (\ev_E^{-1} \circ h) (x) \right)=
 f\left( \vpr_W\left(\ddt{t=0} f^{-1}\left(\Phi_{X^\cov}^{-t} \circ h \circ \Phi_X^t(x)
\right) \right)\right).
\end{aligned}
\end{equation}
It is left to check that the right hand sides of~\eqref{covXhx}
and~\eqref{evEsecond} coincide, i.e.\ that
\begin{equation*}
\begin{aligned}
 f\left( \vpr_W\left(\ddt{t=0} f^{-1}\left(\Phi_{X^\cov}^{-t}\circ h \circ \Phi_X^t(x)
\right) \right)\right) =
 \vpr_E \left(
\ddt{t=0} \Phi_{X^\cov}^{-t} \circ h \circ \Phi_X^t(x)) \right).
\end{aligned}
\end{equation*}
Let $\gamma(t)$ be the curve  $f^{-1}(\Phi_{X^\cov}^{-t} \circ h \circ
\Phi_X^t(x))) $ in $W$.
Then the last equation can be written as
\begin{equation*}
\begin{aligned}
f \left( \vpr_W (\dot{\gamma}(0) )\right)  = \vpr_E \left( \ddt{t=0} (f\circ
\gamma)(t) \right)
\end{aligned}
\end{equation*}
or, equivalently, as
\begin{equation*}
\begin{aligned}
f \left( \vpr_W (\dot{\gamma}(0) )\right)   = \vpr_E \left( Tf \left( \dot{\gamma}(0) \right) \right).
\end{aligned}
\end{equation*}
Hence, we only have to ensure that $f \circ \vpr_W = \vpr_E \circ Tf$.
Every element of $TW$ can be written as $\ddt{t=0} (w_0 + w_1t)$ for suitable
$w_0$, $w_1 \in W$.
We get
\begin{equation*}
\begin{aligned}
f \circ \vpr_W \left( \ddt{t=0}(w_0 + w_1t) \right) & = f \circ \pr_2 \circ
\vl_W^{-1} \left( \ddt{t=0}(w_0 + w_1t) \right)
\\ & = f (\pr_2 (w_0,w_1)) = f(w_1)
\end{aligned}
\end{equation*}
and, using that $f$ is linear,
\begin{equation*}
\begin{aligned}
\vpr_E \left( Tf \left( \ddt{t=0} (w_0 + w_1 t) \right) \right)
& = \pr_2 \circ \vl_E^{-1} \left( \ddt{t=0} (f(w_0)+ f(w_1)t \right)
\\& = \pr_2 \left( f(w_0), f(w_1) \right) = f(w_1).
\end{aligned}
\end{equation*}
Hence the result.
\end{proof}

\section{Darboux-Lie derivatives with respect to vertical right-invariant vector fields}\label{sec:vertical}
Write $\glie$ for the Lie algebra of the group $G$.
For each section $a$ of $P\gtimes \glie$, we denote by $X^a$
the vertical vector field
on $P$ whose flow is given by
\begin{equation*}
\begin{aligned}
\Phi_{X^a}^t p = p \exp ( (p\bs a(x)) t),\quad \mbox{for } x \in M, p \in P_x.
\end{aligned}
\end{equation*}
It is a $G$-invariant vector field on $P$. Conversely, one can show that if
$\widetilde{X}$ is a
$G$-invariant vertical vector field  on $P$ then it coincides with $X^a$, where
\begin{equation*}
\begin{aligned}
a \from M & \to P\gtimes \glie \\
x & \mapsto \left[ p, \ddt0 p \bs \Phi^t_{\widetilde{X}}p \right],
\end{aligned}
\end{equation*}
for an arbitrary $p \in P_x$.

Let $V$ be a $G$-module. The induced action of $\dot\gamma(0) \in \glie$ on $v \in V$ is
defined by
\begin{equation}
\label{liealgebraactiononV}
\begin{aligned}
 \dot\gamma(0) v =\vpr_V \left( \ddt0 \gamma(t)v\right).
\end{aligned}
\end{equation}
For $a \in \Gamma(M, P\gtimes \glie)$ and $h \from F(M) \to P\gtimes V$, we
define $a \cdot h \from F(M) \to P\gtimes V$ by
\begin{equation*}
\begin{aligned}
a\cdot h (y) = [p, (p \bs a(x)) (p\bs h(y))],
\end{aligned}
\end{equation*}
where $y \in F(M)$,  $x$ is the base point of the fiber of $y$
and $p$ is an arbitrary point in the fiber $P_x$.
\begin{proposition}
\label{fundamentalvectorfieldV}
Let $V$ be a $G$-module.
For every $a \in \Gamma(M, P\gtimes \glie)$ and every map $h \from F(M) \to
P\gtimes V $ of fiber bundles over $M$,
we have
$\rlie_{X^a} h = - a \cdot h$.
\end{proposition}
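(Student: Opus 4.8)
The plan is to compute $\rlie_{X^a} h(y)$ directly from the flow formula in \Cref{proprlieomegaflow}, specialized to the case $N=V$ with $\omega = \pr_2 \circ \vl_V^{-1}$ and $\widetilde{X} = X^a$. Fix $x \in M$, $p \in P_x$, and $y \in F(M)_x$. First I would observe that since $X^a$ is vertical, the projected vector field $X$ on $M$ is zero, hence its flow $\Phi_X^t$ is the identity near $x$, and therefore the canonical lift $\mathcal{F}(X)$ is the zero vector field on $F(M)$, so $\Phi_{\mathcal{F}(X)}^t(y) = y$ for all $t$. This collapses the formula to
\begin{equation*}
\begin{aligned}
\rlie_{X^a} h(y) = \left[ p, \omega\left( \ddt0 (\Phi_{X^a}^t p) \bs h(y) \right) \right].
\end{aligned}
\end{equation*}

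Next I would unravel the division operator. Write $h(y) = [p, v]$ with $v = p \bs h(y) \in V$; this is legitimate since $p \in P_x$ and $h(y)$ lies in the fiber $(P\gtimes V)_x$. Using the defining property $\Phi_{X^a}^t p = p \exp((p\bs a(x))t)$ and the rules in~\eqref{divisionoperatorproperties}, together with the definition $p \bs [p', w] = (p\bs p')w$, I would compute
\begin{equation*}
\begin{aligned}
(\Phi_{X^a}^t p) \bs h(y) = (\Phi_{X^a}^t p) \bs [p, v] = \bigl( (\Phi_{X^a}^t p) \bs p \bigr) v = \exp\bigl(-(p\bs a(x))t\bigr) v.
\end{aligned}
\end{equation*}
This is a curve in $V$ through $v$ at $t=0$. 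Applying $\omega = \pr_2 \circ \vl_V^{-1}$ and invoking the definition~\eqref{liealgebraactiononV} of the $\glie$-action on $V$ (with $\gamma(t) = \exp(-(p\bs a(x))t)$, so that $\dot\gamma(0) = -(p\bs a(x))$), I get $\omega\bigl(\ddt0 \exp(-(p\bs a(x))t)v\bigr) = -(p\bs a(x))\,v = -(p\bs a(x))(p\bs h(y))$.

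Finally I would assemble: $\rlie_{X^a} h(y) = \bigl[ p, -(p\bs a(x))(p\bs h(y)) \bigr] = -\bigl[ p, (p\bs a(x))(p\bs h(y)) \bigr] = -(a\cdot h)(y)$, which is exactly the claimed identity, and since $y$, $x$, $p$ were arbitrary this finishes the proof. The only genuine points requiring care — not really obstacles — are the justification that $\mathcal{F}(X)=0$ when $X=0$ (which follows from the characterization~\eqref{cdcharcterizingcanonicallift} of the canonical lift, since $\Phi_X^t$ is locally the identity) and making sure the chain rule application through $\exp$ produces $-(p\bs a(x))$ and not $+(p\bs a(x))$; keeping track of this sign is what produces the minus sign in the statement. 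One should also note that the answer is independent of the choice of $p \in P_x$, which is automatic since the whole expression is written as a well-defined element of $P\gtimes V$, but it is worth remarking that changing $p$ to $pg$ transforms $p\bs a(x)$ by $\Ad_{g^{-1}}$ and $p \bs h(y)$ by $g^{-1}$, and these cancel consistently.
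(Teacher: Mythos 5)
Your proof is correct and follows essentially the same route as the paper: specialize \Cref{proprlieomegaflow} to $\widetilde{X}=X^a$, unwind the flow $\Phi_{X^a}^t p = p\exp((p\bs a(x))t)$ via the rules~\eqref{divisionoperatorproperties}, and read off the sign from~\eqref{liealgebraactiononV}. The only difference is that you spell out why $\Phi_{\mathcal{F}(X)}^t(y)=y$ (the paper uses this silently), which is a welcome bit of extra care rather than a departure.
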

\begin{proof}
Let $x \in M$, $y \in F(M)_x$ and $p \in P_x$.
From Proposition~\ref{proprlieomegaflow}, it follows that
\begin{equation*}
\begin{aligned}
p\bs \rlie_{X^a} h (y) &=
 \vpr_V \left( \ddt0 ( \Phi_{X^a}^tp)\bs h (  y)\right).
\end{aligned}
\end{equation*}
Applying the definition of  $\Phi_{X^a}^t$, we get
\begin{equation*}
\begin{aligned}
p\bs \rlie_{X^a} h (y) &=
 \vpr_V \left( \ddt0  (p\exp(  (p\bs a(x))t))\bs h (  y)\right).
\end{aligned}
\end{equation*}
As for every  $g \in G$ and $z \in (P \gtimes V)_x$, we have $(pg) \bs z
=
g^{-1} ( p\bs z)$, the above formula can be rewritten in the form
\begin{equation*}
\begin{aligned}
p\bs \rlie_{X^a} h (y) &=
\vpr_V \left( \ddt0  \exp( - (p\bs a(x))t) (p\bs h (
y))\right).
\end{aligned}
\end{equation*}
By~\eqref{liealgebraactiononV}, we get
$p\bs \rlie_{X^a} h (y) =
 - (p\bs a(x)) (p\bs h (  y))$.
Hence $\rlie_{{X^a}} h  =-a \cdot h$.
\end{proof}
\medskip

When the target bundle is $P\gtimes\cg$ instead of $P \gtimes V$ the resulting
formula is more interesting.
To state the result, we need additional notation.
For $h \from F(M) \to P\gtimes {}_c G$, we deviate from the standard
conventions and denote by $h^{-1}$ the map from
$F(M)$ to $P\gtimes {}_c G$ given by $h^{-1} (y) = [ p, (p\bs h(y))^{-1}]$,
where $p\in P$ lies over the same point in $M$ as $y$.

Further, for $a \in \Gamma(M, P\gtimes \glie)$, we define $\Ad_h (a) \from F(M) \to
P\gtimes \glie$ by
\begin{equation*}
\begin{aligned}
\Ad_h(a) (y) = [p, \Ad_{ p \bs h(y)} (p \bs a(x))],
\end{aligned}
\end{equation*}
for all $x \in M$, $y\in F(M)_x$ and any $p \in P_x$.
\begin{proposition}
\label{fundamentalvectorfieldG}
For every $a \in \Gamma(M, P\gtimes \glie)$ and $h \from F(M) \to P\gtimes
{}_c G $,
we have
$\rlie_{X^a} h =  a\circ p_{M} - \Ad_{h^{-1}} (a)$,
where $p_M \from F(M) \to M$ is the bundle projection.
\end{proposition}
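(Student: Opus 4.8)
The plan is to apply the flow formula of \Cref{proprlieomegaflow} with $\widetilde{X} = X^a$, $N = \cg$ and $\omega = \omega_{MC}$, and then unwind the resulting curve in $G$ by hand. The first point to record is that $X^a$ is vertical, so the vector field it induces on $M$ is the zero field; hence $\mathcal{F}(X) = \mathcal{F}(0) = 0$ and $\Phi^t_{\mathcal{F}(X)}(y) = y$ for every $t$. Therefore, for $x \in M$, $y \in F(M)_x$ and an arbitrary $p \in P_x$, \Cref{proprlieomegaflow} reduces to
\begin{equation*}
p \bs \rlie_{X^a} h(y) = \omega_{MC}\!\left( \ddt0 (\Phi^t_{X^a} p) \bs h(y) \right).
\end{equation*}

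Next I would make the curve on the right-hand side explicit. Set $\xi := p\bs a(x) \in \glie$ and $g := p\bs h(y) \in G$, so that $h(y) = [p,g]$ and, by definition of $X^a$, $\Phi^t_{X^a} p = p \exp(\xi t)$. Using $(pg')\bs p' = g'^{-1}(p\bs p')$ from \eqref{divisionoperatorproperties} together with $p\bs p = e_G$, one gets $(\Phi^t_{X^a} p)\bs p = \exp(-\xi t)$; then, since $p\bs[p',z] = (p\bs p')z$ and the $G$-action on $\cg$ is by conjugation,
\begin{equation*}
(\Phi^t_{X^a} p) \bs h(y) = (\Phi^t_{X^a} p)\bs [p,g] = \exp(-\xi t)\, g\, \exp(\xi t) =: c(t),
\end{equation*}
a curve in $G$ with $c(0) = g$.

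It then remains to compute $\omega_{MC}(\dot c(0)) = (T_e L_g)^{-1}(\dot c(0)) = \ddt0 g^{-1} c(t) = \ddt0 \big(g^{-1}\exp(-\xi t) g\big)\exp(\xi t)$. The two factors $t\mapsto g^{-1}\exp(-\xi t)g$ and $t\mapsto \exp(\xi t)$ are curves through $e_G$, so the derivative of their product at $0$ is the sum of the two derivatives, namely $-\Ad_{g^{-1}}\xi$ and $\xi$. Hence
\begin{equation*}
p \bs \rlie_{X^a} h(y) = \xi - \Ad_{g^{-1}}\xi = (p\bs a(x)) - \Ad_{(p\bs h(y))^{-1}}(p\bs a(x)).
\end{equation*}
Finally I would translate this back into $P\gtimes\glie$: since $a(x) = [p, p\bs a(x)]$ the first term is $a\circ p_M$ evaluated at $y$, and since $p\bs h^{-1}(y) = (p\bs h(y))^{-1}$ by the definition of $h^{-1}$, the second term is exactly $\Ad_{h^{-1}}(a)(y)$ by the definition of $\Ad_{h^{-1}}(a)$; adding fiberwise in the vector bundle $P\gtimes\glie$ gives $\rlie_{X^a} h = a\circ p_M - \Ad_{h^{-1}}(a)$. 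I do not expect a genuine obstacle here: the whole proof is bookkeeping, and the only places where care is needed are the sign conventions in \eqref{divisionoperatorproperties}, the fact that conjugation is taken as a \emph{left} action on $\cg$, and the product rule for curves through the identity — a slip in any one of these would flip a sign in the final formula.
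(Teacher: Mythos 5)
Your proof is correct and follows essentially the same route as the paper's: the paper also reduces via \Cref{proprlieomegaflow} to computing $\omega_{MC}\bigl(\ddt0 \exp(-bt)\,g\,\exp(bt)\bigr) = -\Ad_{g^{-1}}(b) + b$ with $b = p\bs a(x)$ and $g = p\bs h(y)$, merely citing ``the same chain of arguments as in \Cref{fundamentalvectorfieldV}'' for the steps you spell out explicitly (verticality of $X^a$, the formula for $\Phi^t_{X^a}p$, and the identity $(pg')\bs z = g'^{-1}(p\bs z)$). All signs and the final translation back into $P\gtimes\glie$ check out.
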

\begin{proof}
Write $*_c$ for the conjugation action of $G$ on itself, i.e.\ $h *_c g =
hgh^{-1}$ for all $h$, $g \in G$. Let $x \in M$, $p \in P_x$ and $y\in
F(M)_x$. By the same chain of arguments as in the proof
of Proposition~\ref{fundamentalvectorfieldV}, we get
\begin{equation*}
\begin{aligned}
p\bs \rlie_{{X}^a} h (y) &=
\omega_{MC} \left( \ddt0  \exp( - (p\bs a(x))t) *_c (p\bs h (  y))\right) .
\end{aligned}
\end{equation*}
For any $b\in \glie$ and $g \in G$, we have
$\exp (- bt) *_c g = \exp(-bt) g
\exp(bt)$.
Next
\begin{equation*}
\begin{aligned}
\omega_{MC}  \left( \ddt0 \exp(-bt)g \exp(bt) \right) & = \ddt0 g^{-1}
\exp(-bt) g \exp(bt) \\& = - \Ad_{g^{-1} }(b) + b.
\end{aligned}
\end{equation*}
Therefore with $b = p \bs a(x)$ and $g= p\bs h(y)$, we get
\begin{equation*}
\begin{aligned}
p\bs \rlie_{X^a} h (y) = - \Ad_{(p\bs h(y))^{-1}} ( p \bs a(x)) + (p \bs
a(x)).
\end{aligned}
\end{equation*}
This proves the proposition.
\end{proof}
More generally, for a $G$-equivariant $V$-valued $1$-form $\omega$ on $N$, we
define $*_\omega \from \glie \times N \to V$ by
\begin{equation}
\begin{aligned}
 b *_\omega z = \omega\left(\ddt0
\exp(bt)z\right).
\end{aligned}
\end{equation}
Then adapting the first part of  the proof
of Proposition~\ref{fundamentalvectorfieldV}, we get
\begin{equation}
\begin{aligned}
p \bs \rlie_{X^a}^{\omega} h(y) = - (p \bs a(x)) *_\omega (p \bs h(y)),
\end{aligned}
\end{equation}
for all $y \in F(M)$. This equation can be rewritten as
\begin{equation}
\label{rlieXageneric}
\begin{aligned}
\rlie_{X^a}^{\omega} h = - a \hat{*}_\omega h,
\end{aligned}
\end{equation}
for a suitably defined $\hat{*}_\omega$.

In the case $P$ is equipped with two $G$-principal connections, we can compare the
corresponding  covariant Darboux-Lie
derivatives $\lie^\cov_X$ and $\lie^{\widetilde{\cov}}_X$.
Write $X^H$
and
$X^{\widetilde{H}}$ for the horizontal lifts of the vector field $X \in \vf(M)$
to $P$ with
respect to these connections.  Then $X^H - X^{\widetilde{H}}$ is a vertical
vector field on $P$ and  there is a unique section $a\in
\Gamma(M, P\gtimes \glie)$ such that $X^H - X^{\widetilde{H}} = X^a$.
If $V$ is a $G$-module and $h\from F(M) \to P\gtimes V$ is a vector bundle
map,
Proposition~\ref{fundamentalvectorfieldV} implies that
\begin{equation}
\begin{aligned}
\lie^{\cov}_X h- \lie^{\widetilde{\cov}}_X h = - a \cdot h.
\end{aligned}
\end{equation}
If $h \from F(M) \to P\gtimes \cg$ is a fiber bundle map,
Proposition~\ref{fundamentalvectorfieldG} implies that
\begin{equation}
\begin{aligned}
\lie^{\cov}_X h- \lie^{\widetilde{\cov}}_X h = a\circ p_{M} -
\Ad_{h^{-1}} (a),
\end{aligned}
\end{equation}
where $p_{M} \from F(M) \to M$ is the projection of the fiber bundle
$F(M)$.

\section{Leibniz rule}\label{sec:leibniz}
In future articles,
we will need the expressions for
$\rlie_{\widetilde{X}} (s\cdot \beta)$, where $s \in
\Gamma(M, P\gtimes {}_c G)$ and $\beta \in \Omega^1(M, P\gtimes V)$, and for
$\rlie_{\widetilde{X}} ( \alpha\wedge \beta)$, where $\beta$ is as before and
$\alpha \in \Omega^1(M, P\gtimes \glie)$.
This section aims to explore the Leibniz rule for the Darboux-Lie derivative
with respect to some binary operations.

Notice that the map $s\cdot \beta$ can be described as the composition
\begin{equation*}
\begin{aligned}
TM \xrightarrow{\cong} M \times_M TM \xrightarrow{s\times_M \beta}  P\gtimes ( {}_c G \times V)
\xrightarrow{ } P \gtimes V
\end{aligned}
\end{equation*}
and $\alpha \wedge \beta$ as the composition
\begin{equation*}
\begin{aligned}
\Lambda^2 TM \to TM\otimes TM \xrightarrow{ \alpha \otimes \beta}  P\gtimes (\glie \otimes V) \to P
\gtimes V.
\end{aligned}
\end{equation*}
Thus in both cases we have a composition of
\begin{enumerate}[$\bullet$]
\item a natural map $\eta_M \from F(M) \to F_1(M) * F_2 (M)$, where
$*$ is either $\times$ or~$\otimes$;
\item a suitably defined  product $h_1 * h_2 \from F_1(M) * F_2(M) \to P \gtimes
(N_1 * N_2) $ of $h_1 \from F_1(M) \to P\gtimes N_1$ and $h_2 \from
F_2(M) \to P \gtimes N_2$;
\item the map from $P \gtimes (N_1 * N_2) \to P\gtimes N_3$
induced by a map $N_1 * N_2 \to N_3$.
\end{enumerate}
Below we develop a machinery to deal with Darboux-Lie derivatives of
compositions of the above type.

Applying Proposition~\ref{liederivativecomposition}
and~\eqref{liederivativeofnaturalmapiszero}, we conclude  that for every natural map
$\eta_M \from
F'(M) \to F(M)$ and for $h \from F(M) \to P\gtimes N$
\begin{equation*}
\begin{aligned}
\tilde\rlie_{\widetilde{X}} (h \circ \eta_M) = Th \circ
\trlift_{(\mathcal{F}'(X), \mathcal{F}(X))} \eta_{M} +
(\tilde\rlie_{\widetilde{X}} h )\circ \eta_M =
(\tilde\rlie_{\widetilde{X}} h) \circ \eta_M .
\end{aligned}
\end{equation*}
Hence
\begin{equation}
\label{darbouxlienaturalmap}
\begin{aligned}
\rlie_{\widetilde{X}}^{\omega} (h \circ \eta_M) =  (
\rlie_{\widetilde{X}}^\omega h) \circ \eta_M.
\end{aligned}
\end{equation}

\subsection{Darboux-Lie derivative of a product}
Suppose $F_1$ and $F_2$ are natural bundles. Define the natural bundle
$F_1 \times F_2$ by $(F_1 \times F_2) (M) = F_1(M) \times_M F_2(M)$.

Given $h_1 \from F_1(M) \to P\gtimes N_1$ and $h_2 \from F_2(M) \to P\gtimes
N_2$, define
\begin{equation*}
\begin{aligned}
h_1 \times_M h_2 \from (F_1 \times F_2) (M) & \to P\gtimes (N_1 \times N_2) \\
(y_1, y_2) & \mapsto [p, (p \bs h_1(y_1), p \bs h_2(y_2) )],
\end{aligned}
\end{equation*}
where $y_1\in F_1(M)_x$, $y_2\in F_2(M)_x$, $p \in P_x$ for some $x \in M$.
Let $\omega_1 \from TN_1 \to V_1$ and $\omega_2 \from TN_2 \to V_2$ be
$G$-equivariant $1$-forms with values in $G$-modules $V_1$ and $V_2$.
Define $\omega_1 \times \omega_2 \from T(N_1 \times N_2) \to V_1 \oplus V_2$ by
\begin{equation*}
\begin{aligned}
\omega_1 \times \omega_2 \left( \ddt0 (\gamma_1(t), \gamma_2(t)) \right) =
\left( \omega_1 ( \dot\gamma_1(0)), \omega_2 ( \dot\gamma_2 (0)) \right).
\end{aligned}
\end{equation*}
By Proposition~\ref{proprlieomegaflow}, we get for $\widetilde{X} \in \vf(P)^G$ over
$X \in \vf(M)$
\begin{equation*}
\begin{aligned}
p \bs\!\! &\,\, \rlie_{\widetilde{X}}^{\omega_1 \times \omega_2} (h_1 \times_M h_2) (y_1,
y_2)= \\& =
\omega_1 \times \omega_2 \left( \ddt0 \Phi_{\widetilde{X}}^t p \bs (h_1 \times_M h_2) ( \Phi_{\mathcal{F}_1(X)}^t y_1, \Phi_{\mathcal{F}_2(X)}^t y_2) \right) \\& =
\omega_1 \times \omega_2 \left( \ddt0 \big(\, \Phi_{\widetilde{X}}^t p \bs h_1
\circ \Phi_{\mathcal{F}_1(X)}^t (y_1)\, ,\,  \Phi_{\widetilde{X}}^t p \bs  h_2 \circ \Phi_{\mathcal{F}_2(X)}^t (y_2)\, \big) \right) \\& =
\left( \omega_1\!  \left( \ddt0\!\!  \Phi_{\widetilde{X}}^t p \bs h_1 \circ
\Phi_{\mathcal{F}_1(X)}^t (y_1)\right) ,\, \omega_2 \!\left( \ddt0\!\!
\Phi_{\widetilde{X}}^t p \bs  h_2 \circ \Phi_{\mathcal{F}_2(X)}^t (y_2)\right) \right) \\& =
\left( p \bs \rlie_{\widetilde{X}}^{\omega_1} h_1 (y_1),
p \bs \rlie_{\widetilde{X}}^{\omega_2} h_2 (y_2) \right).
\end{aligned}
\end{equation*}
Hence
\begin{equation}
\label{rlieproduct}
\begin{aligned}
\rlie_{\widetilde{X}}^{\omega_1 \times \omega_2} (h_1 \times_M h_2) =
\rlie_{\widetilde{X}}^{\omega_1} h_1 \times_M \rlie_{\widetilde{X}}^{\omega_2}
h_2.
\end{aligned}
\end{equation}
\subsection{Darboux-Lie derivative of a tensor product}
For natural vector bundles $E_1$, $E_2$, we define the natural vector bundle
$E_1\otimes E_2$ by $(E_1 \otimes E_2)(M) = E_1 (M) \otimes E_2(M)$.
Given two vector bundle maps $h_1 \from E_1(M) \to P\gtimes V_1$ and
$h_2 \from E_2(M) \to P\gtimes V_2$, we define
\begin{equation*}
\begin{aligned}
h_1 \otimes h_2 \from (E_1 \otimes E_2) (M) & \to P \gtimes (V_1 \otimes
V_2) \\
y_1\otimes y_2 & \mapsto [p, (p \bs h_1(y_1))\otimes (p \bs h_2(y_2) )],
\end{aligned}
\end{equation*}
where $y_1\in E_1(M)_x$, $y_2\in E_2(M)_x$, $p \in P_x$ for some $x \in M$.
We have for all $x \in M$, $y_1 \in E_1(M)_x$, $y_2 \in E_2(M)_x$
\begin{equation*}
\begin{aligned}
\Phi^t_{(\mathcal{E}_1 \otimes \mathcal{E}_2)(X)} (y_1 \otimes y_2) =
\Phi^t_{\mathcal{E}_1(X)} (y_1) \otimes \Phi^t_{\mathcal{E}_2(X)}(y_2).
\end{aligned}
\end{equation*}
By Proposition~\ref{proprlieomegaflow}, we get for $\widetilde{X} \in \vf(P)^G$ over
$X \in \vf(M)$
\begin{equation*}
\begin{aligned}
p \bs\!\! &\,\, \rlie_{\widetilde{X}} (h_1 \otimes h_2) (y_1 \otimes
y_2)= \\& =
 \vpr_{V_1 \otimes V_2} \left( \ddt0 \Phi_{\widetilde{X}}^t p \bs
(h_1 \otimes h_2) ( \Phi_{\mathcal{F}_1(X)}^t y_1\otimes \Phi_{\mathcal{F}_2(X)}^t y_2) \right) \\& =
  \vpr_{V_1 \otimes V_2}\left( \ddt0  \Phi_{\widetilde{X}}^t p \bs h_1
\circ \Phi_{\mathcal{F}_1(X)}^t (y_1)\otimes  \Phi_{\widetilde{X}}^t p \bs  h_2
\circ \Phi_{\mathcal{F}_2(X)}^t (y_2)\,  \right).
\end{aligned}
\end{equation*}
Given curves $\gamma_1 \from I \to V_1$ and $\gamma_2 \from I \to V_2$, we have
\begin{equation*}
\begin{aligned}
 \vpr_{V_1 \otimes V_2} \left( \ddt0 \gamma_1(t) \otimes
\gamma_2(t) \right) & =
 \vpr_{V_1} ( \dot \gamma_1(0)) \otimes \gamma_2(0) \\&
\phantom{=}+
\gamma_1(0) \otimes  \vpr_{V_2} ( \dot \gamma_2(0)).
\end{aligned}
\end{equation*}
Therefore
\begin{equation}
\label{beforerlietensorproduct}
\begin{aligned}
p \bs \rlie_{\widetilde{X}} (h_1 \otimes h_2) (y_1 \otimes
y_2) & =
(p \bs  \rlie_{\widetilde{X}} h_1 (y_1) ) \otimes ( p \bs h_2(y_2))
\\ & \phantom{=} + ( p \bs h_1(y_1)) \otimes
(p \bs \rlie_{\widetilde{X}} h_2 (y_2)).
\end{aligned}
\end{equation}
Thence
\begin{equation}
\label{rlietensorproduct}
\begin{aligned}
\rlie_{\widetilde{X}} (h_1 \otimes h_2) = \rlie_{\widetilde{X}} h_1 \otimes h_2
+ h_1
\otimes \rlie_{\widetilde{X}} h_2.
\end{aligned}
\end{equation}

\subsection{Darboux-Lie derivative of a composition}
In this subsection we deal with the Darboux-Lie derivative of $(\id \gtimes f)
\circ h$, where $h \from
F(M) \to P\gtimes N$ and $f$ is a $G$-equivariant map from $N$ to $N'$.
Suppose $\omega' \from TN'\to
V'$ is a $G$-equivariant $1$-form with values in a $G$-module $V'$. By
Proposition~\ref{proprlieomegaflow},
for $h \from F(M) \to P\gtimes N$ and every $x \in M$, $y \in F(M)_x$ and $p \in P_x$
\begin{equation*}
\begin{aligned}
p \bs \rlie_{\widetilde{X}}^{\omega'} ( (\id \gtimes f) \circ h)(y) & =\omega'\left( \ddt0
  \Phi_{\widetilde{X}}^t p \bs (( \id \gtimes f) \circ h \circ
\Phi_{\mathcal{F}(X)}^t (y))
\right) \\& = f^* \omega' \left( \ddt0 \Phi_{\widetilde{X}}^t p \bs (h \circ
\Phi_{\mathcal{F}(X)}^t (y))
\right) \\& =p \bs \rlie_{\widetilde{X}}^{f^*\omega'} h (y).
\end{aligned}
\end{equation*}
Hence
\begin{equation}
\label{postcomposition}
\begin{aligned}
\rlie^{\omega'}_{\widetilde{X}} ( (\id \gtimes f) \circ h) =
\rlie_{\widetilde{X}}^{f^* \omega'} h.
\end{aligned}
\end{equation}
The ability to write the above formula is the reason  we don't require $\omega$
to be a parallelization form in the definition of
$\rlie^\omega_{\widetilde{X}}$, since even if $\omega'$ is a parallelization
form its pull-back $f^*\omega'$, in general, is not.

Formula~\eqref{postcomposition} becomes more useful if there is a relation between $f^*\omega'$
and
a $G$-equivariant $1$-form $\omega\from TN \to V$, where $V$ is
a
$G$-module.
The vector space $\Hom_\R(V,V')$ has the $G$-module structure defined by
$(g\alpha)(v) = g \alpha( g^{-1} v)$
for all $g \in G$, $\alpha \in \Hom_\R(V,V')$ and $v \in V$.
 Now, assume that there is a $G$-equivariant map $\varphi \from  N \to
\Hom_\R(V,V')$ such that
\begin{equation}
\label{varphiproperty}
\begin{aligned}
(f^*\omega')_z = \varphi(z) \circ \omega_z
\end{aligned}
\end{equation}
for each
$z\in N$.
Let $x \in M$, $y\in F(M)_x$ and $p \in P_x$. Denote the curve $\Phi_{\widetilde{X}}^t p \bs (h \circ
\Phi_{\mathcal{F}(X)}^t (y)$ by $\gamma$.
By Proposition~\ref{proprlieomegaflow}, we get
\begin{equation}
\label{pbsrliewtXfstaromegahy}
\begin{aligned}
p \bs \rlie_{\widetilde{X}}^{f^*\omega'} h (y)
= f^*\omega'(\dot\gamma(0))
= \varphi (p\bs h(y)) ( \omega(\dot\gamma(0))) =
  \varphi(p\bs h(y))  ( p \bs \rlie_{\widetilde{X}}^\omega h (y)).
\end{aligned}
\end{equation}
Hence
\begin{equation}
\label{chainrulewithvarphi}
\begin{aligned}
\rlie_{\widetilde{X}}^{\omega'} ( (\id\gtimes f)\circ h) (y)= \rlie_{\widetilde{X}}^{f^*\omega'}
h (y)= (( \id \gtimes \varphi)\, h(y))\, ( \rlie_{X}^{\omega} h (y)),
\end{aligned}
\end{equation}
where we define an action of $\Gamma(M, P\gtimes \Hom_{\R}(V,V'))$
on $\Gamma(M, P\gtimes V)$ by
\begin{equation*}
\begin{aligned}
(\psi(s)) (x) = [ p , (p \bs \psi(x)) ( p \bs s(x)) ],
\end{aligned}
\end{equation*}
for all $\psi \in \Gamma(M, P\gtimes \Hom_{\R}(V,V'))$, $s \in \Gamma(M,
P\gtimes V)$, $x \in M$ and $p \in P_x$.

Next we give several examples of $N$, $N'$, $\omega$, $\omega'$ and $f$ for
which there exists a map $\varphi$ satisfying~\eqref{varphiproperty}.
\begin{example}
Take $N=V$, $N' = V'$, $f \from V \to V'$  a
linear $G$-equivariant map,
 $\omega =  \vpr_{V}$ and $\omega' =
 \vpr_{V'}$.
In this case $f^* \omega' = f\circ \omega$.
Hence  $\varphi \from V \to \Hom_\R(V,V')$ is the constant map that
sends each $v \in V$ into $f$ and~\eqref{chainrulewithvarphi} becomes
\begin{equation}
\label{chainrulewithoutvarphi}
\begin{aligned}
\rlie_{\widetilde{X}} ( ( \id\gtimes f) \circ h) (y) = ( \id\gtimes f) (
\rlie_{\widetilde{X}} h(y)).
\end{aligned}
\end{equation}
\end{example}

\begin{example}
Take $N= G\times V$, $N' =V$ and $f \from G\times
V \to V$ to be the
 action of $G$ on $V$.
We equip $N$ with the form $\omega := \omega_{MC} \times  \vpr_V$
and
$N'$ with the form $\omega' :=  \vpr_V$.
Fix $g \in G$ and $v \in V$. Every element of $T_{(g,v)} (G\times V)$ is of the
form $\ddt0 (g\exp(at), v + ut) $ for suitable $a \in \glie$ and $u \in V$.
We get
\begin{equation}
\label{fstaromegaprimegeat}
\begin{aligned}
f^* \omega'  \left( \ddt0 ( ge^{at}, v + ut) \right) &=
\vpr_V \left( \ddt0 \left(ge^{at} v + ge^{at} ut \right) \right) \\&
= g(av) + gu = g( av +u).
\end{aligned}
\end{equation}
Notice that
\begin{equation*}
\begin{aligned}
a = \ddt0 \exp(at) = (T_e L_g)^{-1} \ddt0 g \exp (at) = \omega_{MC}\left( \ddt0
g\exp(at)
\right)
\end{aligned}
\end{equation*}
and
\begin{equation*}
\begin{aligned}
u =  \vpr_V \left(\ddt0 (v + ut)  \right) .
\end{aligned}
\end{equation*}
Hence
\begin{equation}
\label{omegamctimespr}
\begin{aligned}
(\omega_{MC} \times \vpr_V) \left( \ddt0 ( ge^{at}, v + ut)
\right) & = \left(  a,u \right).
\end{aligned}
\end{equation}
Now~\eqref{fstaromegaprimegeat} and \eqref{omegamctimespr} imply that
for all $(g,v) \in
G\times V$
\begin{equation*}
\begin{aligned}
(f^* \omega')_{(g,v)}  = \varphi(g,v) \circ
((\omega_{MC})_g \times (\vpr_V)_v ) ,
\end{aligned}
\end{equation*}
 where
\begin{align}
\label{varphiforGtimesV}
\varphi \from G \times V & \to \Hom_{\R} (\glie \oplus V, V) \\
\nonumber
(g,v) & \mapsto (  ( a, u)  \mapsto g ( av + u)).
\end{align}
\end{example}

\begin{example}
Let $H$ be a Lie group equipped with a left $G$-action by Lie automorphisms.
Take $N=H\times H$, $N'= H$ and $f\from H\times H \to H$ the multiplication
map.
Write $\omega_H$ for the Maurer-Cartan form on $H$. It is $G$-equivariant,
since $G$ acts on $H$ by automorphisms.
The Maurer-Cartan form on the direct product $H\times H$ coincides with
$\omega_{H} \times
\omega_{H}$.

For $h_1$, $h_2 \in H$ and $a_1$, $a_2 \in \hlie = T_e H$, we have
\begin{equation*}
\begin{aligned}
f^* \omega_H  \left( \ddt0 (h_1 e^{a_1t}, h_2 e^{a_2t}) \right)
&  = \omega_H \left( \ddt0 h_1 h_2 h_2^{-1} e^{a_1t} h_2 e^{a_2t} \right)
\\& = \omega_H \left( \ddt0 h_1 h_2 \exp ( \Ad_{h_2^{-1}}(a_1t) )  e^{a_2 t}
\right)
\\& =  \ddt0 \exp ( \Ad_{h_2^{-1}}(a_1) t)  \exp(a_2 t)
\\& = \Ad_{h_2^{-1}}(a_1) + a_2.
\end{aligned}
\end{equation*}
Hence $(f^* \omega_H)_{h_1,h_2} = \varphi(h_1,h_2) \circ ( \omega_H \times
\omega_H)_{h_1,h_2}$, where
\begin{align}
\label{varphiprpduct}
\varphi  \from H\times H & \to \Hom_\R( \hlie \oplus \hlie, \hlie) \\
\nonumber
(h_1, h_2) & \mapsto ( (a_1,a_2) \mapsto \Ad_{h_2^{-1}} (a_1) + a_2 ).
\end{align}\end{example}
\subsection{Synthesis}
The aim of this subsection is to obtain rather general Lie-type formulas by
  combining the results already obtained
in this section. We will treat the special cases in examples.

Let $F_1$, $F_2$ be natural  bundles  and $h_1 \from F_1(M) \to
P\gtimes N_1$, $h_2 \from F_2(M) \to P\gtimes N_2$ morphisms of fibred
manifolds over $M$.
Suppose  $\omega_1 \from TN_1 \to V_1$, $\omega_2 \from
TN_2 \to V_2$, $\omega \from TN \to V$ are $G$-equivariant forms, $f \from N_1
\times N_2
\to N$ is a $G$-equivariant map, and $\varphi \from N_1 \times N_2 \to \Hom_{\R}
( V_1 \oplus V_2, V) $  is a smooth map
such that
\begin{equation*}
\begin{aligned}
(f^* \omega)_z = \varphi(z) \circ ( \omega_1 \times \omega_2)_z.
\end{aligned}
\end{equation*}
Let $\eta  \from F \to F_1 \times F_2 $ be a natural bundle map.
Define
\begin{equation*}
\begin{aligned}
h_1 \times_{f,\eta} h_2 \from F(M) & \to P\gtimes N \\
\end{aligned}
\end{equation*}
to be the composition $(\id \gtimes f)\circ (h_1 \times_M h_2 ) \circ \eta_M$.

Fix $y \in F(M)$. Then $\eta_M(y) = (y_1, y_2)$ for suitable   $y_1 \in F_1(M)$,
$y_2 \in F_2(M)$. Using~\eqref{darbouxlienaturalmap} and~\eqref{chainrulewithvarphi}, we get
\begin{equation}
\label{rlietildeXomegahtimesfetahy}
\begin{aligned}
\rlie_{\widetilde{X}}^{\omega} (h_1 \times_{f,\eta} h_2) (y) & =
\rlie_{\widetilde{X}}^{\omega} ( (\id\gtimes f)
\circ (h_1 \times_M h_2) \circ \eta_M) (y)
\\ & = \rlie_{\widetilde{X}}^\omega ( (\id\gtimes f)
\circ (h_1 \times_M h_2)) ( y_{1} , y_{2})
\\& = ( \id\gtimes \varphi) ( (h_1 \times_M h_2) ( y_{1} , y_{2})) (
\rlie_{\widetilde{X}}^{\omega_1 \times \omega_2} ( h_1 \times_M h_2) ( y_{1} , y_{2})).
\end{aligned}
\end{equation}
By definition of $h_1 \times_M h_2$, we have
\begin{equation*}
\begin{aligned}
(h_1 \times_M h_2) ( y_{1} , y_{2}) = [ p , (p \bs h_1 (y_{1}),   p \bs h_2
(y_{2})) ].
\end{aligned}
\end{equation*}
Hence
\begin{equation*}
\begin{aligned}
( \id\gtimes \varphi) ( (h_1 \times_M h_2) ( y_{1} , y_{2}) )
 =  [ p ,  \varphi (p \bs h_1 (y_{1})  , p \bs h_2 (y_{2}) )  ].
\end{aligned}
\end{equation*}
By \eqref{rlieproduct}, we have
\begin{equation*}
\begin{aligned}
\rlie_{\widetilde{X}}^{\omega_1 \times \omega_2} (h_1 \times_M h_2) (y_{1} , y_{2}) =
[ p , ( p \bs \rlie_{\widetilde{X}}^{\omega_1} h_1 (y_{1})  ,
  p \bs \rlie_{\widetilde{X}}^{\omega_2} h_2 (y_{2}) ) ].
\end{aligned}
\end{equation*}
Thence~\eqref{rlietildeXomegahtimesfetahy} becomes
\begin{gather}
\label{verygeneralleibniztimesM}
\begin{aligned}
\rlie_{\widetilde{X}}^\omega (h_1 \times_{f,\eta} h_2) (y) =
[ p , \varphi & (p \bs h_1 (y_{1})  , p \bs h_2 (y_{2}) ) \\& ( p \bs \rlie_{\widetilde{X}}^{\omega_1} h_1 (y_{1})  ,
  p \bs \rlie_{\widetilde{X}}^{\omega_2} h_2 (y_{2})  )].
\end{aligned}
\end{gather}
\begin{example}
Suppose $s \in \Gamma(M, P \gtimes {}_c G)$ and $\beta \from F(M) \to P \gtimes
V$, where $V$ is a $G$-module. We define $\eta_M \from F(M) \to M\times_M
F(M)$ to be $Z \mapsto (x, Z)$ for
every $x \in M$ and $Z \in F(M)_x$.
Write $f$ for the action map $G \times V \to V$.
Then $s\cdot \beta$ defined at the beginning of the section coincides with
$s\times_{f,\eta} \beta$.
Taking $\varphi$ defined by~\eqref{varphiforGtimesV} and
applying~\eqref{verygeneralleibniztimesM}, we get for $x \in M$, $p\in P_x$ and  $Z \in T_x M$
\begin{equation*}
\begin{aligned}
p \bs \rlie_{\widetilde{X}} (s\cdot \beta) (Z) & = \varphi(p \bs s(x), p \bs
\beta(Z) ) ( p \bs \rlie_{\widetilde{X}} s(x) , p \bs \rlie_{\widetilde{X}} \beta (Z) )
\\& =
(p \bs s(x)) \left(  (p \bs \rlie_{\widetilde{X}} s(x) ) ( p \bs \beta(Z)) + p
\bs \rlie_{\widetilde{X}} \beta (Z)  \right).
\end{aligned}
\end{equation*}
Hence
\begin{equation}
\label{rlietildeXsbeta}
\begin{aligned}
\rlie_{\widetilde{X}}( s\cdot \beta) = s \cdot (
\rlie_{\widetilde{X}} s \cdot \beta + \rlie_{\widetilde{X}}
\beta ).
\end{aligned}
\end{equation}
If $P$ is equipped with a $G$-principal connection and $X \in \vf(M)$, then we get the
following property for the covariant Darboux-Lie derivative
\begin{equation}
\begin{aligned}
\lie^\cov_X ( s\cdot \beta) & = s \cdot ( \lie^\cov_X s \cdot \beta + \lie^\cov_X
\beta) .
\end{aligned}
\end{equation}
\end{example}
\begin{example}
Now let $s_1$, $s_2 \in \Gamma(M, P\gtimes {}_c G)$. Notice that the
multiplication map $\mu_G \from {}_c G \times {}_c G \to {}_c G$ is
$G$-equivariant.
Define $\eta_M \from M \to M \times_M M$ by $\eta_M(x) = (x,x)$.
We write $s_1 \cdot s_2$ for $s_1 \times_{\mu_G,\eta} s_2$.
Taking $\varphi$ defined by~\eqref{varphiprpduct} and
applying~\eqref{verygeneralleibniztimesM}, we get
for all $x \in M$ and $p \in P_x$,
\begin{equation*}
\begin{aligned}
p \bs \rlie_{\widetilde{X}} (s_1\cdot s_2) (x) & = \varphi ( p \bs s_1(x), p \bs
s_2(x)) (p \bs \rlie_{\widetilde{X}} s_1 (x), p \bs \rlie_{\widetilde{X}} s_2
(x))
\\& = \Ad_{ (p \bs s_2(x))^{-1}} \left( p \bs \rlie_{\widetilde{X}} s_1 (x)
\right) + (p \bs \rlie_{\widetilde{X}} s_2 (x)) .
\end{aligned}
\end{equation*}
Thus with appropriate definitions for $s_2^{-1}$ and $\Ad_{s_2^{-1}}$, we get
\begin{equation}
\label{AdsinvrlietildeXs}
\begin{aligned}
\rlie_{\widetilde{X}} (s_1\cdot s_2) =
\Ad_{ s_2^{-1}} \left( \rlie_{\widetilde{X}} s_1
\right) +  \rlie_{\widetilde{X}} s_2.
\end{aligned}
\end{equation}
In the case $P$ is equipped with a $G$-principal  connection, this implies
\begin{equation}
\begin{aligned}
\lie^\cov_X (s_1\cdot s_2) &= \Ad_{s_2^{-1}} ( \lie^\cov_X s_1) + \lie^\cov_X
s_2.
\end{aligned}
\end{equation}
\end{example}
\medskip
\medskip

Now we turn our attention to the Leibniz rule involving tensor product.
Let $E_1$, $E_2$ be natural vector bundles and $h_1 \from E_1(M) \to
P\gtimes V_1$, $h_2 \from E_2(M) \to P\gtimes V_2$ morphisms of vector bundles,
$f \from V_1 \otimes V_2 \to V$ a homomorphism of $G$-modules, and $\eta \from E \to E_1 \otimes E_2 $ a natural transformation of natural  vector bundles.
Write $h_1 \otimes_{f,\eta} h_2 $ for the composition $(\id \gtimes f) \circ (h_1
\otimes  h_2)
\circ \eta_M$.

Fix $x \in M$, $y \in E(M)_x$, and $p \in P_x$.
Then $\eta_M(y)$ can be written as $\sum_{i\in I} y_{1,i} \otimes y_{2,i}$ for
suitable elements $y_{1,i} \in E_1(M)_x$ and $y_{2,i} \in E_2(M)_x$.
Using~\eqref{darbouxlienaturalmap} and~\eqref{chainrulewithoutvarphi}, we get
\begin{equation*}
\begin{aligned}
\rlie_{\widetilde{X}} (h_1 \otimes_{f,\eta} h_2) (y) & =
\rlie_{\widetilde{X}} ( (\id \gtimes f)
\circ (h_1 \otimes h_2) \circ \eta_M) (y)
\\ & = \rlie_{\widetilde{X}} ( (\id \gtimes f) \circ (h_1 \otimes h_2))
\left(\eta_M(y)\right)
\\ & = (\id \gtimes f) \left( \rlie_{\widetilde{X}} (h_1 \otimes h_2)
\left(\eta_M(y)\right) \right).
\end{aligned}
\end{equation*}
Applying~\eqref{beforerlietensorproduct}, we get
\begin{equation*}
\begin{aligned}
p \bs \rlie_{\widetilde{X}} (h_1 \otimes h_2)
\left(\eta_M(y)\right) & = \sum_{i \in I} (
p \bs \rlie_{\widetilde{X}} h_1 (y_{1,i})
\otimes p \bs h_2
(y_{2,i}) \\ & \phantom{=\sum_{i\in I}} + p \bs h_1(y_{1,i}) \otimes p \bs\rlie_{\widetilde{X}} h_2
(y_{2,i})) .
\end{aligned}
\end{equation*}
Thence
\begin{equation}
\label{verygeneralleibnizotimes}
\begin{aligned}
\rlie_{\widetilde{X}} (h_1 \otimes_{f,\eta} h_2) (y) & =
\sum_{i \in I}
[ p, f(p \bs \rlie_{\widetilde{X}} h_1 (y_{1,i})
\otimes p \bs h_2 (y_{2,i}))]
 \\ & \phantom{=\sum_{i\in I}} + [ p, f(p \bs h_1(y_{1,i}) \otimes p \bs\rlie_{\widetilde{X}} h_2
(y_{2,i})) ].
\end{aligned}
\end{equation}
\begin{example}
Let $\alpha \in \Omega^1(M, P \gtimes \glie)$ and $\beta \in \Omega^1(M,
P\gtimes V)$, where $V$ is a $G$-module.
Define $\alpha \wedge \beta \in \Omega^2(M, P \gtimes V)$ by
\begin{equation*}
\begin{aligned}
(\alpha \wedge \beta) (Z_1, Z_2) = \alpha (Z_1) \cdot \beta(Z_2) -
\alpha(Z_2) \cdot \beta(Z_1).
\end{aligned}
\end{equation*}
Alternatively, it can be written as the composition $(\id \gtimes f) \circ
( \alpha \otimes \beta) \circ \eta_M$, where
\begin{equation*}
\begin{aligned}
\eta_M \from \Lambda^2 TM & \to TM \otimes TM \\
 Z_1 \wedge Z_2 & \mapsto Z_1 \otimes Z_2 - Z_2 \otimes Z_1
\end{aligned}
\end{equation*}
 and $f \from \glie \otimes V \to V$ is the action of $\glie$ on $V$.
Notice that $f$ is $G$-equivariant.
Thence $\alpha \wedge \beta = \alpha\otimes_{f,\eta} \beta$.
Applying~\eqref{verygeneralleibnizotimes}, we get
\begin{equation*}
\begin{aligned}
p \bs \rlie_{\widetilde{X}} (\alpha \wedge \beta) (Z_1, Z_2) & =
( p \bs \rlie_{\widetilde{X}} \alpha (Z_1)) ( p \bs  \beta (Z_2))
+ ( p \bs \alpha (Z_1)) ( p \bs \rlie_{\widetilde{X}} \beta (Z_2)) \\& \phantom{=}
- ( p \bs \rlie_{\widetilde{X}} \alpha (Z_2)) ( p \bs \beta (Z_1))
- ( p \bs \alpha (Z_2)) ( p \bs \rlie_{\widetilde{X}} \beta (Z_1)).
\end{aligned}
\end{equation*}
Hence
\begin{equation}
\label{leibnizforwedgeproduct}
\begin{aligned}
\rlie_{\widetilde{X}} (\alpha \wedge \beta) = \rlie_{\widetilde{X}} \alpha
\wedge \beta + \alpha \wedge
\rlie_{\widetilde{X}} \beta.
\end{aligned}
\end{equation}
\begin{remark}
More generally we can take $\alpha \in \Omega^k(M, P\gtimes \glie)$ and
$\beta \in \Omega^\ell(M, P\gtimes V)$. Then $\alpha \wedge \beta$ can be
defined as $\alpha \otimes_{f,\eta} \beta$, where $f$ is the same as above and
\begin{equation*}
\begin{aligned}
\eta_M \from  \Lambda^{k + \ell} TM&\to \Lambda^k TM \otimes \Lambda^\ell TM\\
Z_1 \wedge \dots
\wedge Z_{k+\ell} & \mapsto \sum_{\sigma\in \shuffles_{k,\ell}}
(-1)^{\sigma} Z_{\sigma(1)} \wedge \dots \wedge Z_{\sigma(k)} \otimes
Z_{\sigma(k+1)} \wedge \dots \wedge Z_{\sigma(k+\ell)},
\end{aligned}
\end{equation*}
where $\shuffles_{k,l}$ is the set of all $(k,\ell)$-shuffles.
As expected, for every $\widetilde{X} \in \vf(P)^G$ one gets
\begin{equation}
\label{rliealphabetahigerdegrees}
\begin{aligned}
\rlie_{\widetilde{X}} (\alpha \wedge \beta) = \rlie_{\widetilde{X}} \alpha \wedge \beta + \alpha \wedge
\rlie_{\widetilde{X}} \beta.
\end{aligned}
\end{equation}
In the case $P$ is equipped with a $G$-principal connection and $X \in \vf(M)$, this
implies
\begin{equation}
\lie^\cov_X (\alpha \wedge \beta)  = \lie^\cov_X \alpha \wedge \beta + \alpha
\wedge \lie^\cov_X \beta.
\end{equation}
\end{remark}
\end{example}

\section{Cartan magic formula}\label{sec:magic}
Suppose $\cov$ is a covariant derivative on a vector bundle $E$ Then the
exterior covariant derivative $d^\cov$ of
$\beta \in \Omega^k(M,E)$  is given by
\begin{equation*}
\begin{aligned}
d^\cov \beta (X_0,\dots,X_{k}) & := \sum_{j=0}^k (-1)^{j} \cov_{X_j}
( \beta(X_0,\dots, \widehat{X}_j,\dots, X_{k}) )
\\ & \phantom{=} + \sum_{ i<j} (-1)^{i+j} \beta ( [X_i,X_j], X_0,\dots,
\widehat{X}_i,\dots,
\widehat{X}_j,\dots,X_{k}).
\end{aligned}
\end{equation*}
In the special case when $\cov$ is a covariant derivative on $P\gtimes V$
induced by a $G$-principal connection on $P$, we get
\begin{equation*}
\begin{aligned}
\cov_{X_j} ( \beta (X_0,\dots, \widehat{X}_j,\dots, X_{k})) = \lie^\cov_{X_j} (
\beta (X_0,\dots, \widehat{X}_j,\dots, X_{k})).
\end{aligned}
\end{equation*}
Indeed, let $s \in \Gamma(M, P\gtimes V)$ and $X^H$ the horizontal lift of
$X$ to $P$. Then $X^H_V$ it the $\cov$-horizontal lift of $X$ to $P\gtimes V$
and
by the definition of covariant Darboux-Lie derivative
\begin{equation*}
\begin{aligned}
\lie^\cov_X(s)(x) & =  \lie_{(X, X^H_V)}  (s)(x) =
\vpr_V\left( \ddt{t=0} \Phi_{X^H_V}^{-t} \circ s\circ \Phi_X^t (x) \right)
 = (\cov_X s)(x) .
\end{aligned}
\end{equation*}
Hence
\begin{equation}
\label{dcovbeta}
\begin{aligned}
d^\cov \beta (X_0,& \dots,X_{k})  = \sum_{j=0}^k (-1)^{j} \lie^\cov_{X_j}
( \beta(X_0,\dots, \widehat{X}_j,\dots, X_{k}) )
\\ & \phantom{=} + \sum_{i<j} (-1)^{i+j} \beta ( [X_i,X_j], X_0,\dots,
\widehat{X}_i,\dots,
\widehat{X}_j,\dots,X_{k}).
\end{aligned}
\end{equation}

Let $Y_1$,\dots, $Y_k$ and $Z$ be vector fields on $M$.
The section $\beta(Y_1,\dots, Y_k)$ of $P\gtimes V$ is the composition of
$\opwedge\limits_{j=1}^k Y_j \from M \to \wedge^k TM$
and $\beta \from \wedge^k TM \to P\gtimes V$.
Write $\widetilde{Z}$ for the canonical lift of $Z$ to $\wedge^k TM$.
Applying Proposition~\ref{liederivativecomposition}, we get
\begin{equation*}
\begin{aligned}
\tilde\rlie_{Z^H} ( \beta \circ \opwedge\limits_{j=1}^k Y_j) =
T\beta \circ \trlift_{(Z,\widetilde{Z})} (\opwedge\limits_{j=1}^k Y_j)
+ ( \trlift_{(\widetilde{Z},Z^H_V)}\beta) \circ \opwedge_{j=1}^k Y_j.
\end{aligned}
\end{equation*}

Applying $\id \gtimes \vpr_V$ to the both sides, we obtain
\begin{equation*}
\begin{aligned}
\lie^{\cov}_{Z} ( \beta \circ \opwedge\limits_{j=1}^k Y_j) =
\beta \circ \lie_{(Z,\widetilde{Z})} (\opwedge\limits_{j=1}^k Y_j)
+ ( \lie_{(\widetilde{Z},Z^H_V)}\beta) \circ \opwedge_{j=1}^k Y_j.
\end{aligned}
\end{equation*}
Now $\lie_{(Z,\widetilde{Z})}$ is the usual Lie derivative $\lie_{Z}$ of tensor fields and
$\lie_{(\widetilde{Z}, Z^H_V)}$ is the covariant Darboux-Lie derivative
$\lie^\cov_{Z}$.
Hence
\begin{equation}
\label{rlieZH}
\begin{aligned}
\lie^{\cov}_{Z} ( \beta &\circ \opwedge\limits_{j=1}^k Y_j) =
\beta \circ \lie_{Z} (\opwedge\limits_{j=1}^k Y_j)
+ ( \lie^{\cov}_{Z}\beta) \circ \opwedge_{j=1}^k Y_j
\\ & = \sum_{j=1}^k \beta ( Y_1,\dots, [Z,Y_j],\dots,Y_k) +
\lie^{\cov}_{Z} \beta ( Y_1,\dots, Y_k).
\end{aligned}
\end{equation}
Taking $X_0=Z$ and $X_i = Y_i$ for $1\le i\le k$ in~\eqref{dcovbeta}, we get
\begin{equation*}
\begin{aligned}
i_Z d^\cov \beta ( Y_1,\dots,Y_k) & =
\lie^{\cov}_{Z} (\beta(Y_1,\dots,Y_k) ) - \sum_{j=1}^{k} (-1)^{j-1} \lie^{\cov}_{Y_j}
(\beta(Z,Y_1,\dots, \widehat{Y}_j,\dots,Y_k))
\\ & \phantom{=} +  \sum_{j=1}^k (-1)^j \beta( [Z,Y_j], Y_1,\dots,
\widehat{Y}_j,\dots,k)
\\ & \phantom{=} - \sum_{i<j} (-1)^{i+j} \beta(Z, [Y_i,Y_j], Y_1, \dots,
\widehat{Y}_i,\dots, \widehat{Y}_j,\dots, Y_{k})
\\ & = (\lie^{\cov}_{Z} \beta) (Y_1,\dots,Y_k) - d^\cov (i_Z\beta)(Y_1,\dots, Y_k).
\end{aligned}
\end{equation*}
Hence we obtain the Cartan magic formula
for covariant Darboux-Lie derivative
\begin{equation}
\label{cartanmagicformula}
\begin{aligned}
\lie^\cov_{Z} \beta = i_{Z} (d^\cov \beta) + d^\cov (i_Z\beta).
\end{aligned}
\end{equation}
\section{Future work on $G$-structures}\label{sec:future}
The principal motivation for introducing the covariant Darboux–Lie derivative
was its anticipated application to the theory of $G$-structures. Recall that a
\emph{$G$-structure} on an $n$-dimensional manifold $M$ is a reduction of the
structure group of $TM$ to $G < \Gl_n(\R)$. This can be described in several equivalent ways:
\begin{enumerate}[\ \ \ $-$\ ]
    \item by choosing an open covering of $M$ such that the transition maps of $TM$ with respect to this covering take values in $G$;
    \item by specifying a principal $G$-subbundle of the frame bundle $L(\R^n, TM)$;
    \item by giving an isomorphism $TM \cong P \times_G V$, for a suitable
principal $G$-bundle $P$ and $G$-module $V$.
\end{enumerate}

The last formulation is particularly suited for analytic treatments, in contrast
with the more geometric viewpoint of describing the structure as a
subbundle\footnote{
By “analytic” we mean a formulation adapted to symbolic manipulation (compare
 \emph{analytic geometry}). This use of the word “analytic” should not be
confused with “analytic” in the sense of real or complex analysis.}.
It should be noted that two distinct isomorphisms $\beta$, $\beta' \from TM
\to P\gtimes V$ may determine the same $G$-structure. In fact, we
will formally verify that $\beta$ and $\beta'$ define the same
$G$-structure if and only if there is $s \in \Gamma(M, P\gtimes \cg)$ such
that $\beta' = s\cdot \beta$. Thus a $G$-structure on $M$ can be described as a
gauge equivalence class $\left[ \beta \right]$ of soldering forms $\beta \from
TM \to
P\gtimes V$.
We conclude this section by announcing two results that will be proved in forthcoming work.
Suppose $P$ is equipped with a fixed $G$-principal connection.
\begin{proposition}
A vector field $X \in \vf(M)$ is an infinitesimal automorphism of a
$G$-structure $\left[ \beta \right]$ on $M$ if and only if $\lie^{\cov}_X \beta
= a \cdot \beta$ for some $a \in \Gamma(M , P\gtimes \glie)$.
\end{proposition}
\begin{proposition}
A $G$-structure $\left[ \beta \right]$ is torsion-free if and only if $d^\cov
\beta = \alpha \wedge \beta$ for some $\alpha \in \Omega^1(M, P\gtimes \glie)$.
\end{proposition}

\bibliography{gstr}

\begin{thebibliography}{1}

\bibitem{alekseevsky}
D.~V. Alekseevsky and P.~W. Michor.
\newblock Characteristic classes of {$G$}-structures.
\newblock {\em Differential Geom. Appl.}, 3(4):323--329, 1993.

\bibitem{epstein}
D.~B.~A. Epstein and W.~P. Thurston.
\newblock Transformation groups and natural bundles.
\newblock {\em Proc. London Math. Soc. (3)}, 38(2):219--236, 1979.

\bibitem{godina1}
Marco Godina and Paolo Matteucci.
\newblock The {L}ie derivative of spinor fields: theory and applications.
\newblock {\em Int. J. Geom. Methods Mod. Phys.}, 2(2):159--188, 2005.

\bibitem{godina2}
Marco Godina and Paolo Matteucci.
\newblock The {L}ie derivative of spinor fields: theory and applications.
\newblock In {\em New topics in mathematical physics research}, pages 81--107.
  Nova Sci. Publ., New York, 2009.

\bibitem{kolar_article}
Josef Jany\v{s}ka and Ivan Kol\'{a}\v{r}.
\newblock Lie derivatives on vector bundles.
\newblock In {\em Proceedings of the {C}onference on {D}ifferential {G}eometry
  and its {A}pplications ({N}ov\'{e} {M}\v{e}sto na {M}orav\v{e}, 1980)}, pages
  111--116. Univ. Karlova, Prague, 1982.

\bibitem{kolar}
Ivan Kol\'{a}\v{r}, Peter~W. Michor, and Jan Slov\'{a}k.
\newblock {\em Natural operations in differential geometry}.
\newblock Springer-Verlag, Berlin, 1993.

\bibitem{trautman}
Andrzej Trautman.
\newblock Invariance of {L}agrangian systems.
\newblock In {\em General relativity (papers in honour of {J}. {L}. {S}ynge)},
  pages 85--99. Clarendon Press, Oxford, 1972.

\end{thebibliography}
\bibliographystyle{plain}

\end{document}